\UseRawInputEncoding
\documentclass[12pt]{article}

\usepackage{dsfont}
\usepackage{amsfonts,amsmath,amsthm}
\usepackage{algorithm, algorithmic}
\usepackage{color}
\usepackage{graphicx}
\usepackage{stmaryrd}        % provides \llbracket and \rrbracket

\usepackage{caption}
\usepackage{subcaption}

\usepackage[paper=a4paper,dvips,top=2cm,left=2cm,right=2cm,
foot=1cm,bottom=4cm]{geometry}

\newcommand{\dc}[1]{#1}

\newcommand{\drset}{\hat{\mathbb R}}
\newcommand{\qset}{\mathbb H}
\newcommand{\dqset}{\hat{\mathbb H}}

\newcommand{\Mdet}{\mathrm{Mdet}}
\newcommand{\Cdet}{\mathrm{Cdet}}
\newcommand{\Krdet}{\mathrm{Krdet}}
\newcommand{\Kcdet}{\mathrm{Kcdet}}
\newcommand{\qdet}{\mathrm{det}_q}

\begin{document}
	\large
	
	\title{{Moore Determinant of Dual Quaternion  Hermitian Matrices}}
	\author{Chunfeng Cui\footnote{LMIB of the Ministry of Education, School of Mathematical Sciences, Beihang University, Beijing 100191, China.
			({\tt chunfengcui@buaa.edu.cn}). }
		\and \
		Liqun Qi\footnote{Department of Applied Mathematics, The Hong Kong Polytechnic University, Hung Hom, Kowloon, Hong Kong; Department of Mathematics, School of Science, Hangzhou Dianzi University, Hangzhou 310018, China.
			({\tt maqilq@polyu.edu.hk}).}
		\and  \
		Guangjing Song \footnote{ School of Mathematics and Information Sciences, Weifang University, Weifang
			261061, China.
			({\tt sgjshu@163.com}).}
		\and and \
		Qingwen  Wang\footnote{Department of Mathematics, Shanghai University, Shanghai 200444, China.
			({\tt wqw@shu.edu.cn}).}
	}
	\date{\today}
	\maketitle

	\begin{abstract}
		
		In this paper, we extend the {Chen and Moore determinants} of quaternion {Hermitian} matrices to dual quaternion  Hermitian matrices.
		We  show  the Chen determinant of dual quaternion Hermitian {matrices} is invariant under  addition, switching,  multiplication, and unitary operations at the both hand sides.  We then show the Chen and Moore determinants {of dual quaternion Hermitian matrices} are equal to each other, and they are also equal to the products of eigenvalues.
		The characteristic polynomial of  a dual quaternion  Hermitian matrix  is also studied.
		\medskip

		% \medskip

		\textbf{Key words.} Moore determinant, dual quaternion, eigenvalue.
		
		\medskip
		\textbf{MSC subject classifications. 15A18, 15B33, 20G20, 65F40.}
	\end{abstract}

	\renewcommand{\Re}{\mathds{R}}
	\newcommand{\rank}{\mathrm{rank}}
	\renewcommand{\span}{\mathrm{span}}
	\newcommand{\X}{\mathcal{X}}
	\newcommand{\A}{\mathcal{A}}
	\newcommand{\I}{\mathcal{I}}
	\newcommand{\B}{\mathcal{B}}
	\newcommand{\C}{\mathcal{C}}
	\newcommand{\OO}{\mathcal{O}}
	\newcommand{\e}{\mathbf{e}}
	\newcommand{\0}{\mathbf{0}}
	\newcommand{\dd}{\mathbf{d}}
	\newcommand{\ii}{\mathbf{i}}
	\newcommand{\jj}{\mathbf{j}}
	\newcommand{\kk}{\mathbf{k}}
	\newcommand{\va}{\mathbf{a}}
	\newcommand{\vb}{\mathbf{b}}
	\newcommand{\vc}{\mathbf{c}}
	\newcommand{\vq}{\mathbf{q}}
	\newcommand{\vg}{\mathbf{g}}
	\newcommand{\pr}{\vec{r}}
	\newcommand{\pc}{\vec{c}}
	\newcommand{\ps}{\vec{s}}
	\newcommand{\pt}{\vec{t}}
	\newcommand{\pu}{\vec{u}}
	\newcommand{\pv}{\vec{v}}
	\newcommand{\pn}{\vec{n}}
	\newcommand{\pp}{\vec{p}}
	\newcommand{\pq}{\vec{q}}
	\newcommand{\pl}{\vec{l}}
	\newcommand{\vt}{\rm{vec}}
	\newcommand{\vx}{\mathbf{x}}
	\newcommand{\vy}{\mathbf{y}}
	\newcommand{\vu}{\mathbf{u}}
	\newcommand{\vv}{\mathbf{v}}
	\newcommand{\y}{\mathbf{y}}
	\newcommand{\vz}{\mathbf{z}}
	\newcommand{\T}{\top}
	
	\newtheorem{Thm}{Theorem}[section]
	\newtheorem{Def}[Thm]{Definition}
	\newtheorem{Ass}[Thm]{Assumption}
	\newtheorem{Lem}[Thm]{Lemma}
	\newtheorem{Prop}[Thm]{Proposition}
	\newtheorem{Cor}[Thm]{Corollary}
	\newtheorem{example}[Thm]{Example}
	\newtheorem{remark}[Thm]{Remark}
	
	\section{Introduction}
	
	{Quaternions are extensions of complex numbers, introduced by the Irish mathematician Hamilton in 1843.    A quaternion has three imaginary parts and the quaternion multiplication is noncommutative.   Consequently, the determinant of quaternion matrices    is much more difficult than their real and complex counterparts since the quaternion numbers.
		Over the years,  many determinants of quaternion matrices with different multiplication orders are  proposed,  such as  the}  {Cayley} determinant, the  {Study} determinant, the Moore determinant {\cite{Moore22}}, the  {determinant and double determinant by Chen \cite{Ch91a, Ch91b}},  the  column and row determinants  by Kyrchei {\cite{Kyr08, Kyr12}}, etc. See {\cite{Asl96,Dy72, Li02, LSZ20}} for more details.
	
	In 1922, Eliakim Hastings Moore   defined the determinant  of quaternion  {Hermitian} matrices    \cite{Moore22}, which is widely known as   the  Moore determinant. The Moore determinant retains some basic properties of determinants such as $\Mdet(A) =0$ if and only if $A$ is singular.
	% and $\Mdet(A)$ is equal to the product of the   eigenvalues of $A$. However, some other properties such as $\Mdet(AB) = \Mdet(A) \Mdet(B)$ may not hold since $AB$ may not be a Hermitian matrix even if both $A$ and $B$ are Hermitian matrices.
	For more results on the Moore determinant, please refer to \cite{Asl96, Kyr12}.
	{However, Moore determinant is not appliable for arbitrary non-Hermitian quaternion matrices. In 1991, Chen proposed the determinant and double determinant  \cite{Ch91a, Ch91b}. In 2008 and 2012,  Kyrchei proposed  the  column and row determinants  \cite{Kyr08, Kyr12}.}

	It was  {the} British mathematician Clifford who introduced dual numbers and dual quaternions in 1873.  A {\bf dual quaternion}  has two quaternions, which are  the standard and dual parts of this dual quaternion, respectively.
	Recently, eigenvalues of dual quaternion Hermitian matrices were applied to {dual unit gain graphs \cite{CLQW24}}  {and} multi-agent formation control \cite{QC24}.    The  {latter} problem is an important application {area} {in robotics and control} \cite{QWL23, WYL12}.   To study more about the eigenvalues of dual quaternion Hermitian matrices, we need to determine the coefficients of the characteristic polynomial of a dual quaternion Hermitian matrix.  One possible way to achieve this is to apply the Moore determinant to dual quaternion Hermitian matrices.
	In this paper,  {we  study} the Moore determinant of dual quaternion Hermitian matrices.
	
	In   the next section, we review some preliminary knowledge on dual quaternion numbers, the Moore determinant and its extension, the Chen determinant.   In Section 3, we show that the Chen determinant of a dual quaternion Hermitian matrix {is invariant under the addition, switching,  multiplication, and unitary operations at the both hand sides, which is also} equal to the product of eigenvalues of that matrix.     Then in Section
	4, we show that the Moore determinant and the Chen determinant are equal for a dual quaternion Hermitian matrix.  Thus,  the Moore determinant of a dual quaternion Hermitian matrix is also equal to the product of eigenvalues of that matrix.   Finally, we study the characteristic polynomial of a dual quaternion Hermitian matrix in Section 5.

	\section{Preliminary}
	{\subsection{Quaternions and dual quaternions}}
	A {\bf quaternion} number $q\in\qset$ can be  written as  {$q=q_0+q_1\ii+q_2\jj+q_3\kk$.
		Here, $\ii,\jj,\kk$ are imaginary units that satisfy $\ii^2=\jj^2=\kk^2=\ii\jj\kk=-1$ and $\ii\jj = -\jj\ii = \kk$.  Thus, the multiplication of quaternion numbers is not commutative.
		The conjugate of $q$ is  $q^*=q_0-q_1\ii-q_2\jj-q_3\kk$.   We have %$Re(q) = q_0$ and
		$|q| = \sqrt{q_0^2 + q_1^2 + q_2^2 + q_3^2}$.}
	
	A {\bf dual quaternion} number $q\in\dqset$ can be  written as $q=q_s+q_d\epsilon$, where
	$q_s, q_d \in \qset$, $q_s$ is the standard part of $q$, $q_d$ is the dual part of $q$, $\epsilon$ is the infinitesimal unit, $\epsilon \not = 0$, $\epsilon^2=0$, $\epsilon$ is commutative with quaternion numbers.  If $q_s \not = 0$, then $q$ is called {\bf appreciable}.
	The conjugate of $q$ is  $q^*=q_s^*+q_d^*\epsilon$.  We have %$Re(q) = Re(q_s)+ Re(q_d)\epsilon$, and
	\begin{equation} \label{e1}
		|p| := \left\{ \begin{aligned} |p_s| + {(p_s^*p_d+p_d^*p_s) \over 2|p_s|}\epsilon, & \ \  {\rm if}\  p_s \not = 0, \\
			|p_d|\epsilon, &  \ \  {\rm otherwise}.
		\end{aligned} \right.
	\end{equation}
	%{If $|p|=1$, then we say $p$ is a unit dual quaternion number. Denote the set of unit dual quaternion numbers as $\udqset$.}
	
	{If both $q_s$ and $q_d$ are real numbers, then $q = q_s + q_d\epsilon$ is called a {\bf dual number}.}  %{Denote the set of dual numbers as $\drset$}.  Suppose we have two dual numbers $p = p_s + p_d\epsilon$ and $q = q_s + q_d\epsilon$.   By \cite{QLY22}, if
	%$p_s > p_s$, or $p_s = q_s$ and $p_d > q_d$, then we say $p > q$.   Then this defines positive, nonnegative dual numbers, etc.

	\begin{Prop}\label{prop:prelim_dq}
		Let $\dc q_1$ and  $\dc q_2 \in \dqset$ be two dual  quaternion numbers. Then
		\begin{itemize}
			\item[(i)] $Re(\dc q_1)\le |\dc q_1|$ and the equality holds if and only if $\dc q_1$ is a nonnegative dual number.
			\item[(i)] $Re(\dc q_1) =Re({\dc q^*_1})$ and $Re(\dc q_1\dc q_2) =Re(\dc q_2\dc q_1)$.
			{\item[(ii)] $q_1+q_1^*\in \drset$.}
			\item[(iii)] $|\dc q_1\dc q_2| = |\dc q_1| |\dc q_2|$.
			\item[(iv)] $|\dc q_1+\dc q_2| \le |\dc q_1| +|\dc q_2|$.
		\end{itemize}
	\end{Prop}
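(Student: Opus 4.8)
The plan is to reduce every assertion to the corresponding fact about ordinary quaternions, applied separately to the standard and dual parts, and then to recombine via the total order on dual numbers (compare standard parts first, and only when they agree compare dual parts) together with the appreciable/infinitesimal dichotomy already built into the modulus formula \eqref{e1}. The quaternion ingredients I will invoke are the scalar bound $Re(h)\le|h|$ (with equality exactly when $h$ is a nonnegative real), the identities $Re(h^*)=Re(h)$, $Re(ab)=Re(ba)$ and $h+h^*=2Re(h)\in\rset$, the multiplicativity $|ab|=|a||b|$, the norm identity $h^*h=|h|^2$, and the quaternion triangle inequality $|a+b|\le|a|+|b|$. One auxiliary dual identity is useful throughout: writing $q=q_s+q_d\epsilon$, a direct expansion gives $q^*q=|q_s|^2+2Re(q_s^*q_d)\epsilon=|q|^2$, valid in both the appreciable and the infinitesimal case.

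For the algebraic items (the $Re$ identities and the statement $q_1+q_1^*\in\drset$) I would expand in the $\epsilon$-basis and match coefficients. Since $Re(q_1)=Re(q_s)+Re(q_d)\epsilon$ and $q_1^*=q_s^*+q_d^*\epsilon$, applying $Re(h^*)=Re(h)$ to $q_s$ and $q_d$ gives $Re(q_1)=Re(q_1^*)$ at once, and $q_1+q_1^*=(q_s+q_s^*)+(q_d+q_d^*)\epsilon$ has real standard and dual parts by $h+h^*\in\rset$, hence lies in $\drset$. For $Re(q_1q_2)=Re(q_2q_1)$ I would expand the product, obtaining standard part $q_{1s}q_{2s}$ and dual part $q_{1s}q_{2d}+q_{1d}q_{2s}$, and then apply $Re(ab)=Re(ba)$ to each of the four resulting quaternion products; the symmetric expansion of $q_2q_1$ then matches coefficientwise.

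The modulus items require the case split. For multiplicativity, when both standard parts are appreciable I would use the clean chain $|q_1q_2|^2=(q_1q_2)^*(q_1q_2)=q_2^*q_1^*q_1q_2=q_2^*|q_1|^2q_2=|q_1|^2\,q_2^*q_2=|q_1|^2|q_2|^2$, where $|q_1|^2$ is a dual real scalar and hence central, and then take the appreciable nonnegative square root; when a standard part vanishes the square-root step is unavailable (the modulus is then infinitesimal and $|q|^2=0$ loses information), so I would check the claim directly from \eqref{e1}, using $(q_1q_2)_s=q_{1s}q_{2s}$, the relation $|q_{1d}q_{2s}|=|q_{1d}||q_{2s}|$, and $\epsilon^2=0$. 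For the inequality $Re(q_1)\le|q_1|$, I would first compare standard parts by $Re(q_s)\le|q_s|$: if this is strict the dual order gives the claim immediately; otherwise $q_s$ is a nonnegative real and the comparison drops to the dual parts, which I would read off from \eqref{e1} (substituting $q_s^*=q_s=|q_s|$ when $q_s$ is appreciable, and reducing to $Re(q_d)\le|q_d|$ when $q_s=0$).

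The main obstacle is the triangle inequality, where the total order forces a careful case split on which standard parts vanish. When both standard parts are appreciable the standard-part comparison is the quaternion triangle inequality $|q_{1s}+q_{2s}|\le|q_{1s}|+|q_{2s}|$; if it is strict we are done, and in the equality (aligned) case a substitution shows the dual parts in fact coincide. When both standard parts vanish the claim is just the quaternion triangle inequality applied to $q_{1d}+q_{2d}$ scaled by $\epsilon$. The one sub-case requiring a genuine inequality between the dual coefficients rather than an identity is when exactly one standard part vanishes, say $q_{1s}=0\ne q_{2s}$: there the needed estimate reduces to $Re(q_{2s}^*q_{1d})\le|q_{2s}|\,|q_{1d}|$, a Cauchy--Schwarz-type bound of the form $Re(ab)\le|a||b|$. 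This same boundary between appreciable and infinitesimal standard parts is exactly where the equality characterization in the first item must be pinned down, and tracking these cases cleanly is the part I expect to demand the most care.
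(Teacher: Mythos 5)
The paper states Proposition~\ref{prop:prelim_dq} as a preliminary fact and supplies no proof of its own, so there is nothing internal to compare your argument against. On its merits, your strategy---reduce each claim to the corresponding quaternion fact applied coefficientwise in $\epsilon$, then recombine via the lexicographic order on dual numbers and the appreciable/infinitesimal case split built into \eqref{e1}---is the right one and carries through: the $Re$ identities and $q_1+q_1^*\in\drset$ follow by matching standard and dual parts; the auxiliary identity $q^*q=|q_s|^2+2Re(q_s^*q_d)\epsilon=|q|^2$ is correct in both cases; multiplicativity works via the square-root argument when both standard parts are nonzero and by direct inspection of $(q_1q_2)_d=q_{1d}q_{2s}$ (or $q_{1s}q_{2d}$) otherwise; and in the triangle inequality the aligned sub-case does make the dual parts coincide (substituting $q_{2s}=tq_{1s}$, $t>0$, both dual parts become $Re(q_{1s}^*(q_{1d}+q_{2d}))/|q_{1s}|$ versus the same sum split in two), while the mixed sub-case reduces exactly to $Re(q_{2s}^*q_{1d})\le|q_{2s}|\,|q_{1d}|$ as you say.

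The one point that cannot be ``pinned down'' as you hope is the equality characterization in the first item, because its ``only if'' direction is false as stated. Your own case analysis exposes this: when $q_{1s}$ is a positive real, the dual part of $|q_1|$ is $Re(q_{1s}^*q_{1d})/|q_{1s}|=Re(q_{1d})$, which equals the dual part of $Re(q_1)$ for \emph{every} $q_{1d}$; for instance $q_1=1+\ii\epsilon$ satisfies $Re(q_1)=|q_1|=1$ yet is not a nonnegative dual number. The correct characterization under the stated modulus and the total order is: equality holds if and only if either $q_{1s}$ is a positive real (with $q_{1d}$ arbitrary), or $q_{1s}=0$ and $q_{1d}$ is a nonnegative real. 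You should either prove this corrected version or record only the inequality together with the ``if'' direction; nothing else in the paper uses the ``only if'' direction (the later lemmas invoke only $Re(ab)=Re(ba)$ and the fact that $h+h^*$ is a dual number).
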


	A dual quaternion vector $\vx\in\dqset^{n}$ can be denoted as $\vx=\vx_s+\vx_d\epsilon$. If $\vx_s\neq \0$, we say $\vx$ is appreciable. Otherwise, it is  infinitesimal.
	If $\vx$ is appreciable, then the 2-norm of $\vx$ is defined by  $\sqrt{\sum_{i=1}^n|x_i|^2}$. Otherwise, if $\vx$ is  infinitesimal, then $\|\vx\|=\|\vx_d\|\epsilon$.

	The collection of $m \times n$ dual quaternion matrices is denoted by  $\dqset^{m \times n}$.
	A dual quaternion matrix $A= (a_{ij}) \in \dqset^{m \times n}$ can be denoted as $	A= A_s + A_d\epsilon$,
	where $A_s, A_d \in \qset^{m \times n}$.   The conjugate transpose of $A$ is $A^* = (a_{ji}^*)$.
	Let $A\in \dqset^{m \times n}$ and $B \in \dqset^{n \times r}$.   Then we have $(AB)^* = B^*A^*$.
	
	Given a square dual quaternion matrix $A\in \dqset^{n \times n}$, it is called invertible (nonsingular) if $AB = BA= I_n$ for some $B \in \dqset^{n \times n}$, where $I_n$ is the $n\times n$ identity matrix. Such $B$ is unique and denoted by  $A^{-1}$.
	Square dual quaternion matrix $A$ is called Hermitian if $A^* = A$. Then $A$ is Hermitian if and only if both $A_s$ and $A_d$ are quaternion Hermitian matrices.  Square dual quaternion matrix $A$ is called unitary if $A^*A= I_n$. Apparently,  $A\in \dqset^{n \times n}$ is unitary if and only if its column vectors form an orthonormal basis of $\dqset^n$.
	
	An $n \times n$ dual quaternion Hermitian matrix has exactly $n$ right eigenvalues, which are all dual numbers and also left eigenvalues \cite{QL23}.   We simply call them the eigenvalues of that matrix.

	\subsection{Quaternion determinants}
	
	{A {\bf permutation} $\sigma=\{i_1,\dots,i_n\}\in S_n$ denotes a function $\sigma$ such that $\sigma(j) = i_j$ for all $j=1,\dots,n$. We could also rewrite it as a two-line formulation
		\[\begin{pmatrix}
			1& 2 & \cdots & n-1 & n\\
			i_1 & i_2 & \cdots & i_{n-1} & i_n
		\end{pmatrix}.\]
		A {\bf permutation cycle} $\sigma=(i_1,\dots,i_k)$  is a subset of a permutation whose elements trade places with one another.  Namely, $\sigma(i_j) = i_{j+1}$ for $j=1,\dots,k-1$ and $\sigma(i_k)=i_1$.
		A cycle of length $k$ is called a {\bf $k$-cycle.}
		We could also rewrite the permutation cycle as a two-line formulation
		\begin{equation}\label{equ:2line_cycle}
			\begin{pmatrix}
				i_1 & i_2 & \cdots & i_{k-1} & i_k \\
				i_2 & i_3 & \cdots & i_k & i_1
			\end{pmatrix}.
		\end{equation}
		For instance, the permutation cycle $\sigma=(312)$ represents a permutation $\sigma(3)=1$, $\sigma(1)=2$, and $\sigma(2)=3$.
		The sign of any $k$-cycle is $(-1)^{k-1}$. In fact, by implementing $k-1$ switches, i.e.,  $i_1$ with $i_{k}$,  $i_1$ with $i_{k-1}$,  $\dots$,   $i_1$ with $i_{2}$,  sequentially,  \eqref{equ:2line_cycle} reduces to $$ \begin{pmatrix}
			i_1 & i_2 & \cdots & i_{k-1} & i_k \\
			i_1 & i_2 & \cdots & i_{k-1} & i_k
		\end{pmatrix},$$
		which is corresponding to $\{1,\dots,k\}$.

		The {\bf cyclic decomposition} of a permutation  is to decompose the permutation as a product of disjoint cycles,
		\begin{equation}\label{def:cyc_decomp}
			\sigma=(n_{11}\cdots n_{1l_1})(n_{21}\cdots n_{2l_2})\cdots(n_{r1}\cdots n_{rl_r}).
		\end{equation}
		Every permutation can be written as  a product of disjoint cycles. For instance,
		$\{1,2,3\} = (1)(2)(3)$ and $\{2,1,3\} = (12)(3)$.
		The {\bf inverse  permutation}   $\sigma^{-1}$  is the inverse  of   $\sigma$ such that $\sigma(\sigma^{-1}(i)) = i$ and $\sigma^{-1}(\sigma(i)) = i$ for $i=1,\dots,n$. In the cycle form,  the inverse  permutation just reverses the direction of each cycle. Namely,  $\sigma^{-1}=(i_1,i_n,i_{n-1},\dots,i_2)$.
		For more details on the permutation, we refer to \cite{Ca99}.
	}

	Let $A=(a_{ij})$ be a quaternion Hermitian matrix in $\qset^{n\times n}$ and $\sigma$ be a permutation of $S_n=\{1,\dots,n\}$.
	In 1922,  Eliakim Hastings Moore     defined the {\bf Moore determinant} for quaternion matrices    \cite{Moore22}   as follows,
	\begin{equation}\label{equ:Mdet}
		\Mdet(A) = \sum_{\sigma\in {S_n^M}} {s(\sigma)}a_{\sigma},
	\end{equation}
	where  {the permutation cycle} $\sigma$ is a product of disjoint cycles  {as in \eqref{def:cyc_decomp} and} satisfies
	\begin{equation}\label{equ:sigma_Mdet}
		\begin{aligned}
			n_{i1}<n_{ij} &\text{ for all } j>1, i=1,\dots,r, \text{ and } n_{11}>n_{21}>\cdots>n_{r1},
		\end{aligned}
	\end{equation}
	${s(\sigma)}$ denotes the {sign} of $\sigma$, and
	\[a_{\sigma} = (a_{n_{11},n_{12}}a_{n_{12},n_{13}}\cdots a_{n_{1l_1},n_{11}})(a_{n_{21},n_{22}}\cdots a_{n_{2l_2},n_{21}})\cdots(\cdots a_{n_{rl_r},n_{r1}}).\]
	{Denote}  $S_n^M$ as the set of permutations satisfying \eqref{equ:sigma_Mdet}.
	When the matrix is a complex or real matrix, the Moore determinant \eqref{equ:Mdet} reduces to the ordinary determinant.
	Following \cite{Ch91a}, we also denote  $\sigma_i=(n_{i1}\cdots n_{i l_i})$,
	\[ \langle\sigma_i \rangle=\langle n_{i1}\cdots n_{i l_i}n_{i1}\rangle = a_{n_{i1},n_{i2}}a_{n_{i2},n_{i3}}\cdots a_{n_{il_i},n_{i1}}.\]
	Thus,  $\sigma=\sigma_1\cdots\sigma_r$ and $ \langle\sigma  \rangle = \langle\sigma_1\rangle \cdots \langle\sigma_r\rangle$.

	Let $\va_{\cdot j}$ be the $j$th column and $\va_{i\cdot}$ be the $i$th row of a matrix $A\in\qset^{n\times n}$ or $A\in\dqset^{n\times n}$.
	Let $A_{\cdot j}(\va)$ be a matrix obtained  {from} replacing the $j$th column of $A$ by the column vector  $\va$, and $A_{i\cdot}(\vb)$ be a matrix result from replacing the $i$th row of $A$ by the row vector  $\vb$. Denote by $A^{ij}$ a submatrix of $A$ obtained by deleting both the $i$th row and  the $j$th column.
	Here,  $A_{\cdot j}(\va)$ and $A_{i\cdot}(\vb)$ may not be Hermitian matrices {even if $A$ is Hermitian}.
	Such matrices are {\bf almost Hermitian} \cite{Dy72}.
	{We say a matrix is $k$-almost Hermitian if it is self-adjoint except for the $k$-th row or  column.}

	In 1972, Dyson \cite{Dy72} presented an equivalent formulation of  {the} Moore determinant  for $k$-almost Hermitian matrices as follows ,
	\begin{equation}\label{equ:Mdet2}
		\Mdet(A) = a_{kk} \Mdet(A^{kk}) - \sum_{i=1,i\neq k}^{n}a_{ki}\Mdet(A^{kk}_{\cdot i}(a_{\cdot k})).
	\end{equation}
	Here, $k$ is the index in  the $k$-almost Hermitian.
	If $A$ is Hermitian, then the above value remains the same for all $k=1,\dots,n$.
	{This formulation is quite useful for the numerical  {computation}.}

	However, the Moore determinant is only  applicable to Hermitian or almost Hermitian matrices.
	Several  researchers  have  considered determinants of  arbitrary  quaternion matrices.
	In 1922, Study \cite{St22} proposed determinants of quaternion matrices through their injective algebra homomorphism with complex matrices. Specifically, let $A=A_1+A_2\jj$. Then
	\[\mathrm{Sdet}(A) = \mathrm{det}\left(
	\begin{bmatrix}
		A_1 & -\bar A_2 \\
		A_2 & \bar A_1
	\end{bmatrix}
	\right).\]	In 1991, Chen \cite{Ch91a, Ch91b} proposed a novel definition of quaternion determinant as follows
	\begin{equation}\label{equ:Cdet}
		\Cdet(A) = \sum_{\sigma\in {S_n^C}} (-1)^{n-r} a_{\sigma},
	\end{equation}
	where   {the permutation cycle} $\sigma$ is a product of disjoint cycles {as in \eqref{def:cyc_decomp} and} satisfies
	\begin{equation}\label{equ:sigma_Cdet}
		\begin{aligned}
			n_{i1}>n_{ij} &\text{ for all } j>1, i=1,\dots,r, \text{ and } n=n_{11}>n_{21}>\cdots>n_{r1},
		\end{aligned}
	\end{equation}
	and  $S_n^C$ denotes the set of permutations satisfying \eqref{equ:sigma_Cdet}.
	In the following, we call it  the {\bf Chen determinant} and denote it by  {$\Cdet$}.
	In \eqref{equ:Cdet},  $n$ is fixed as the first element in the cycle $\sigma$.

	In 2008 and also in 2012, Kyrchei \cite{Kyr08, Kyr12} generalized the first element  {in the} Chen determinant to any index  $i\in\{1,\dots,n\}$ and  proposed the column and row determinants of  {arbitrary} quaternion matrices as follows
	\begin{equation}\label{equ:rdet}
		\Krdet_i(A) = \sum_{\sigma\in {S_n^K}} (-1)^{n-r} a_{\sigma},
	\end{equation}
	where $\sigma$ is the permutations of $\{1,\dots,n\}$ satisfying
	\begin{equation}\label{equ:sigma_Krdet}
		\begin{aligned}
			&\sigma=(ii_{k_1}i_{k_1+1}\cdots i_{k_1+l_1})(i_{k_2}i_{k_2+1}\cdots i_{k_2+l_2})\cdots(i_{k_r}i_{k_r+1}\cdots i_{k_r+l_r}), \\
			i_{k_2}&<i_{k_3}<\cdots <i_{k_r}\text{ and } i_{k_t}<i_{k_t+s} \text{ for all } t=2,\dots,r, s=1,\dots,l_t,
		\end{aligned}
	\end{equation}
	and  $S_n^K$ denotes the set of permutations satisfying \eqref{equ:sigma_Krdet}.
	In the following, we call it  the {\bf  Kyrchei row determinant}.
	
	Similarly, the  {\bf  Kyrchei column determinant} is defined by
	\begin{equation}\label{equ:rdet}
		\Kcdet_j(A) = \sum_{\sigma\in {\bar S_n^K}} (-1)^{n-r} a_{\sigma},
	\end{equation}
	where $\sigma$ is the permutations of $\{1,\dots,n\}$ satisfying
	\begin{equation}\label{equ:sigma_Krdet}
		\begin{aligned}
			&\sigma=(j_{k_r}j_{k_r+l_r}\cdots j_{k_r+1}j_{k_r})\cdots (j_{k_2+l_2}\cdots j_{k_2+1}j_{k_2}) (j_{k_1+l_1}\cdots j_{k_1+1}j_{k_1}j), \\
			j_{k_2}&<j_{k_3}<\cdots <j_{k_r}\text{ and } j_{k_t}<j_{k_t+s} \text{ for all } t=2,\dots,r, s=1,\dots,l_t.
		\end{aligned}
	\end{equation}
	When $A$ is a quaternion Hermitian matrix, all Kyrchei row and column determinants are real numbers and are the same, which  also coincide with the Moore determinant.
	Furthermore, the Kyrchei column and row determinants   may  derive the formulation of the inverse  matrix  by the classical adjoint matrix and could be connected  to the solution of linear systems.

	{We now review} the elementary row and column operations {for quaternion} matrices.
	{Consider} the three   elementary row and column operations that can	be used to transform a matrix (square or not) into a simple form  and can  facilitate   {determinant} calculations. In the following definition, we focus on row operations, which are implemented by left  {multiplying} the
	matrices. Column operations can be defined and used in a similar
	fashion, while the matrices that implement them act on the right.
	
	\begin{Def}[Elementary row and column operations]\label{def:elem}
		Given a  {quaternion} matrix $A\in{\qset}^{n\times n}$ and $i,j\in\{ 1,\dots,n\}$.	Consider the following three elementary transformations.

		(i)  Switching of two rows. Let $P_{ij}=(a_{kl}) \in\mathbb R^{n\times n}$ satisfy $$a_{kl}=\left\{\begin{array}{cl}
			1, & \mathrm{ if }\ k=l\neq i \ \mathrm{ and }\ k=l\neq j;\\
			1, & \mathrm{ if }\ k=i\ \mathrm{ and }\  l=j;\\
			1, & \mathrm{ if }\ k=j\ \mathrm{ and }\  l=i;\\
			0, & \mathrm{otherwise}.
		\end{array}\right.$$
		Then {by} multiplying $P_{ij}$ on the left, we switch the $i$-th and the $j$-th rows {of} $A$. We refer $P_{ij}$ as a {\bf switching matrix}.

		(ii)  Multiplication of a row by a scalar.  Let $c\in{\qset}$ be any {quaternion number} and  $P_{i;c}=(a_{kl})\in{\qset}^{n\times n}$ satisfy $${a_{kl}}=\left\{\begin{array}{cl}
			1, & \mathrm{ if }\ k=l\neq i;\\
			c, & \mathrm{ if }\ k=l=i;\\
			0, & \mathrm{otherwise}.
		\end{array}\right.$$
		Then   $P_{i;c}A$   multiplies   the $i$-th   row of  $A$ by $c$ {on} the left.  We refer $P_{i;c}$ as a {\bf  multiplication matrix}.
		
		(iii) Addition of a scalar multiple of one row to another row.   Let $c\in{\qset}$ be any {quaternion number} and  $P_{ij;c}=(a_{kl})\in\qset^{n\times n}$ satisfy  $$a_{kl}=\left\{\begin{array}{cl}
			1, & \mathrm{ if }\ k=l;\\
			c, & \mathrm{ if }\ k=j\ \mathrm{and}\ l=i;\\
			0, & \mathrm{otherwise}.
		\end{array}\right.$$
		Then   $P_{ij;c}A$   adds the $j$-th row of $A$ by  the  $i$-th   row of  $A$ multiplied {with} $c$ one the left.   We refer $P_{ij;c}$ as an {\bf addition matrix}.
	\end{Def}
	
	{For more properties and applications of quaternion determinants, people may check \cite{KTP24, Li02, LLJ22, LSZ20}.	
		
		We note} {that the Moore determinant, the Chen determinant,  the  Kyrchei determinant, and Definition~\ref{def:elem} can be extended to dual quaternions without any difficulty.}
	{However, the study of properties of dual quaternion determinants may be different.
		Very recently, Ling and Qi  \cite{LQ24} studied the determinant properties of dual complex matrices, and then introduced the concept 	of  quasi-determinant of dual quaternion matrices. Based upon these,  they showed  the quasi-determinant of a dual quaternion Hermitian matrix is equivalent to the product
		of the square of the magnitudes of all eigenvalues.
		In this paper, we focus on the determinants  of dual quaternion Hermitian matrix that are equal to the product of eigenvalues, such as the Chen determinant and the Moore determinant.
	}

	Next, we present several  {properties} of the Chen determinant {for quaternion Hermitian matrices} \cite{Ch91a, Ch91b}.
	\begin{Prop}\label{prop_Cdet0}
		Let $A=(a_{ij})$ be a quaternion Hermitian matrix in $\qset^{n\times n}$, $P_{ij}$ be a switching matrix, $P_{i;\alpha}$ be a multiplication matrix, and $P_{ij;\alpha}$ be an addition matrix.
		Then the following results hold.
		
		\begin{itemize}
			\item[(i)] $\Cdet(P_{ij}^*AP_{ij}) = \Cdet(A)$;
			
			\item[(ii)]  $\Cdet(P_{n;\alpha}A) = \alpha\Cdet(A)$, $\Cdet(AP_{n;\alpha}) = \alpha\Cdet(A)$, and $\Cdet(P_{i;\alpha}^*AP_{i;\alpha}) = \alpha^*\alpha\Cdet(A)$;
			
			\item[(iii)] $\Cdet(P_{ij;\alpha}^*AP_{ij;\alpha}) = \Cdet(A)$;
			
			\item[(iv)] For any unitary matrix $U\in\qset^{n\times n}$, we have $\Cdet(U^*AU) = \Cdet(A)$;
			
			%			\item[(v)] Let $\lambda_1,\dots,\lambda_n\in\rset$ be the eigenvalues of $A${. Then}
			%			$$\Cdet(A) = \lambda_1\cdots\lambda_n.$$
		\end{itemize}
	\end{Prop}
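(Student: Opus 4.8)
The plan is to anchor everything on two facts about the quaternion case that make the Chen determinant \eqref{equ:Cdet} tractable: for a Hermitian $A\in\qset^{n\times n}$, the number $\Cdet(A)$ is \emph{real}, and it coincides with the Moore determinant $\Mdet(A)$. Both facts rest on the observation, recorded in Proposition~\ref{prop:prelim_dq}, that $\mathrm{Re}(q_1q_2)=\mathrm{Re}(q_2q_1)$, so a cyclic block $\langle\sigma_i\rangle$ may be rotated without changing its real part; the Moore, Chen, and Kyrchei determinants differ only in how the noncommuting cyclic factors are ordered, and for Hermitian $A$ these reorderings wash out to leave a common real value. I would take the reality of $\Cdet(A)$ and the identity $\Cdet(A)=\Mdet(A)$ as established for the quaternion case (as in \cite{Ch91a, Ch91b}, and consistent with the coincidence of the Kyrchei determinants with $\Mdet$ recalled above), then split the four items according to whether the operation preserves Hermiticity.

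The row/column scaling in (ii) I would prove directly from \eqref{equ:sigma_Cdet}. The index $n$ is distinguished there: it is always the leading entry $n_{11}$ of the first cycle, so in every term $a_\sigma$ the \emph{leftmost} factor is $a_{n,n_{12}}$, while the factor carrying the second index $n$ is the last factor $a_{n_{1l_1},n}$ of that same first block. Left--multiplying row $n$ by $\alpha$ replaces $a_{n,n_{12}}$ by $\alpha a_{n,n_{12}}$, so $\alpha$ factors out of every term on the left and $\Cdet(P_{n;\alpha}A)=\alpha\Cdet(A)$. The column statement I would deduce from the row statement by conjugate transposition: since $(AP_{n;\alpha})^{*}=P_{n;\alpha^{*}}A$ is Hermitian with row $n$ scaled by $\alpha^{*}$, the conjugation rule $\Cdet(B^{*})=\Cdet(B)^{*}$ together with the reality of $\Cdet(A)$ gives $\Cdet(AP_{n;\alpha})=\bigl(\alpha^{*}\Cdet(A)\bigr)^{*}=\alpha\Cdet(A)$. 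The congruence $P_{i;\alpha}^{*}AP_{i;\alpha}$ is Hermitian and scales row $i$ by $\alpha^{*}$ and column $i$ by $\alpha$; since here $i$ need not be the distinguished index, I would evaluate it in the Moore picture using Dyson's recursion \eqref{equ:Mdet2} with pivot $k=i$, which pulls out $\alpha^{*}$ from the scaled row and $\alpha$ from the scaled column to produce the factor $\alpha^{*}\alpha$.

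For the three Hermiticity--preserving congruences (i), (iii), and (iv), the matrices $P_{ij}^{*}AP_{ij}$, $P_{ij;\alpha}^{*}AP_{ij;\alpha}$, and $U^{*}AU$ are again Hermitian, so I would switch to the Moore determinant via $\Cdet=\Mdet$ and argue there. For (i), conjugation by the real switching matrix $P_{ij}$ merely relabels indices by the transposition $\tau=(ij)$, i.e. $a_{kl}\mapsto a_{\tau(k)\tau(l)}$; because Dyson's recursion \eqref{equ:Mdet2} yields the same value for every pivot $k$, the Moore determinant is insensitive to this relabeling. For (iii), I would expand $\Mdet(P_{ij;\alpha}^{*}AP_{ij;\alpha})$ as a polynomial in $\alpha$ through \eqref{equ:Mdet2} and show, exactly as in the commutative proof that adding a multiple of one row (and its conjugate column) leaves the determinant unchanged, that every $\alpha$--dependent contribution is the Moore determinant of a matrix with two equal rows/columns and hence vanishes. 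For (iv) the cleanest route is spectral: $U^{*}AU$ is Hermitian with the same eigenvalues as $A$, and the Moore determinant of a quaternion Hermitian matrix equals the product of its (real) eigenvalues \cite{Asl96}, which is manifestly invariant under unitary congruence.

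The step I expect to be the \textbf{main obstacle} is the careful bookkeeping whenever an operation touches the distinguished index $n$. In the switching identity (i) with $i=n$ or $j=n$, the transposition $\tau$ moves $n$, so $\tau\sigma\tau^{-1}$ need not lie in the Chen canonical form \eqref{equ:sigma_Cdet} and must be re--canonicalized by rotating cycles and reordering the blocks; controlling the resulting rearrangement of noncommuting factors, and verifying that the sign $(-1)^{n-r}$ is unaffected since conjugation preserves the cycle type, is where a direct argument on the Chen sum is most delicate. The same delicacy underlies the conjugation rule $\Cdet(B^{*})=\Cdet(B)^{*}$ and the reality used in (ii). This is precisely why I prefer to route (i), (iii), and (iv), as well as the non-distinguished scaling in (ii), through the Moore determinant and Dyson's recursion, which do not single out any index, rather than manipulating the Chen sum directly.
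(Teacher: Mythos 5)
Your overall architecture---route everything through $\Cdet=\Mdet$ and known facts about the Moore determinant---is genuinely different from the paper, which never leaves the Chen sum: its engine is Lemma~\ref{Lem:<s>2+=2}, the observation that $\langle\sigma_0\rangle+\langle\bar\sigma_0\rangle$ is real (a dual number in the general case) and hence both central and cyclically rotatable, and every item of the proposition is obtained by pairing each permutation with reversed partners and tracking these real blocks (Lemmas~\ref{Lem:switch_Cdet}--\ref{Lem:add_Cdet}), with unitary invariance then obtained by factoring $U$ into switching and addition matrices via an LU decomposition (Lemma~\ref{Lem:u_prod}, Theorem~\ref{Thm:UAU=A}). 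The divergence itself would be fine, but several of your steps have concrete gaps. First, the conjugation rule $\Cdet(B^*)=\Cdet(B)^*$ that you invoke for the non-Hermitian matrix $B=AP_{n;\alpha}$ is not justified: conjugating a term $a_\sigma$ reverses the entire product, which in particular reverses the order of the non-commuting cycle blocks $\langle\sigma_1\rangle,\dots,\langle\sigma_r\rangle$, so $a_\sigma^*$ is not a term of $\Cdet(B^*)$ in the canonical ordering \eqref{equ:sigma_Cdet}; proving the rule requires exactly the real-block commutation argument you are trying to avoid. The paper instead writes $\langle\sigma_1\rangle\alpha$ as $\frac12\left(\langle\sigma_1\rangle+\langle\bar\sigma_1\rangle\right)\alpha$ and commutes $\alpha$ past the real bracket (Lemma~\ref{Lem:rowprod}). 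Second, your Dyson-recursion treatment of $P_{i;\alpha}^*AP_{i;\alpha}$ does not work termwise: in \eqref{equ:Mdet2} with pivot $k=i$ the subdeterminants $\Mdet(A^{ii}_{\cdot j}(a_{\cdot i}\alpha))$ are quaternions, not reals, so ``pulling out $\alpha^*$ from the row and $\alpha$ from the column'' leaves $\alpha^*(\cdot)\alpha$, which is not $\alpha^*\alpha(\cdot)$. The paper sidesteps this by conjugating with $P_{in}$ to move the scaled index to position $n$ and then composing the row and column results.

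Third, for (iii) you assert that the $\alpha$-dependent contributions vanish ``exactly as in the commutative proof'' because they are determinants of matrices with a repeated row or column; but that vanishing is the technical core of the whole argument---the paper devotes Lemmas~\ref{Lem:An_row_ai=0} and~\ref{Lem:An_col_ai=0} to it, with a three-way case split on where the repeated index sits in the cycle structure and a separate sign computation---and it does not follow from the commutative template. Finally, deducing (iv) from ``$\Mdet$ of a Hermitian matrix equals the product of its eigenvalues'' is logically backwards relative to this paper: Theorem~\ref{thm:cdet_eig} derives the eigenvalue product formula \emph{from} unitary invariance, and the standard quaternionic proofs of that formula themselves diagonalize $A$ by a unitary matrix. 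You would need an independent route (e.g.\ the multiplicative Study determinant together with $\mathrm{Sdet}(A)=\Mdet(A)^2$ and a continuity argument to fix the sign) or the paper's elementary-matrix factorization of $U$. As written, item (i) and the row scaling in (ii) are sound, but the column and two-sided scalings in (ii), item (iii), and item (iv) each rest on an unproved or potentially circular claim.
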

	
	{Recall that an $n \times n$ quaternion Hermitian matrix has exactly $n$ right eigenvalues, which are all real numbers and also left eigenvalues \cite{Zh97}.   We simply call them eigenvalues of that matrix.}
	{Recently, Qi and Luo \cite{QL23} showed  an $n \times n$ dual quaternion Hermitian matrix has exactly $n$ right eigenvalues, which are all dual real numbers and also left eigenvalues. {As mentioned early, they are also simply called eigenvalues.}
		In what follows, we show the Chen determinant of dual quaternion Hermitian  {matrices} is equal to the product of these $n$ eigenvalues.}
	
	\section{Properties of the  Dual Quaternion Chen Determinant}
	
	%
	
	%	We first note that Definition 2.2 can be extended to dual quaternions without any difficulty.

	{In this section, we} show that the   {results in Proposition~\ref{prop_Cdet0}} can be generalized to dual quaternion Hermitian matrices.
	We begin with the following lemma.
	\begin{Lem}\label{Lem:<s>2+=2}
		Let  $A=(a_{ij})$ be  a dual quaternion Hermitian matrix in $\dqset^{n\times n}$.
		Suppose $\sigma_0=(p_1p_2\cdots p_skq_1\cdots q_t)$ is a cycle factor of the permutation $\sigma\in S_n$. Denote
		\begin{eqnarray*}
			&\bar\sigma_0=(p_1q_t\cdots q_1 k p_s\cdots p_2),\\
			&\sigma_0^+=(kq_1\cdots q_tp_1p_2\cdots p_s),\\
			&\bar\sigma_0^+=(k p_s\cdots p_2p_1q_t\cdots q_1 ).
		\end{eqnarray*}
		Then we have
		\begin{equation}\label{sigma_bar^+_permute}
			\langle \sigma_0 \rangle + \langle \bar\sigma_0 \rangle = 	\langle \sigma_0^+ \rangle + \langle \bar\sigma_0^+ \rangle.
		\end{equation}
	\end{Lem}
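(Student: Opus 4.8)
The plan is to show that each side of \eqref{sigma_bar^+_permute} equals twice the (dual) real part of a single cyclic product, and then to deduce the identity from the cyclic invariance of the real part. The guiding observation is that the four cycles in the statement are related in only two ways: the barred cycles $\bar\sigma_0,\bar\sigma_0^+$ are the reversals (inverse permutations) of $\sigma_0,\sigma_0^+$, while $\sigma_0^+$ is obtained from $\sigma_0$ merely by rewriting the same cyclic word so that it starts at the pivot $k$ rather than at $p_1$.

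First I would fix the cycle--product dictionary. Writing $\sigma_0=(c_1c_2\cdots c_m)$ with $c_1=p_1,\dots,c_{s+1}=k,\dots,c_{s+1+t}=q_t$, we have $\langle\sigma_0\rangle=a_{c_1c_2}a_{c_2c_3}\cdots a_{c_mc_1}$, and since $\bar\sigma_0=(c_1c_mc_{m-1}\cdots c_2)$ is exactly the reversal of $\sigma_0$, its product is $\langle\bar\sigma_0\rangle=a_{c_1c_m}a_{c_mc_{m-1}}\cdots a_{c_2c_1}$. Using that $A$ is Hermitian, $a_{ij}=a_{ji}^*$, together with the anti-automorphism property $(pq)^*=q^*p^*$, I would verify the key identity $\langle\bar\sigma_0\rangle=\langle\sigma_0\rangle^*$; the same computation applied to $\sigma_0^+$ gives $\langle\bar\sigma_0^+\rangle=\langle\sigma_0^+\rangle^*$. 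Consequently, by Proposition~\ref{prop:prelim_dq} (the property $q_1+q_1^*\in\drset$), each side of \eqref{sigma_bar^+_permute} is a dual real number:
\[
\langle\sigma_0\rangle+\langle\bar\sigma_0\rangle=\langle\sigma_0\rangle+\langle\sigma_0\rangle^*=2\,Re(\langle\sigma_0\rangle),\qquad
\langle\sigma_0^+\rangle+\langle\bar\sigma_0^+\rangle=2\,Re(\langle\sigma_0^+\rangle).
\]
It then remains to prove $Re(\langle\sigma_0\rangle)=Re(\langle\sigma_0^+\rangle)$. For this I would split the cyclic product at the pivot $k$, writing $\langle\sigma_0\rangle=(a_{p_1p_2}\cdots a_{p_sk})(a_{kq_1}\cdots a_{q_tp_1})=:P_1P_2$, so that the rotation defining $\sigma_0^+$ yields precisely $\langle\sigma_0^+\rangle=(a_{kq_1}\cdots a_{q_tp_1})(a_{p_1p_2}\cdots a_{p_sk})=P_2P_1$. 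The equality of the two real parts is then immediate from $Re(q_1q_2)=Re(q_2q_1)$ in Proposition~\ref{prop:prelim_dq}, applied with $q_1=P_1$ and $q_2=P_2$. Combining the three displayed facts gives \eqref{sigma_bar^+_permute}.

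I do not expect a serious obstacle, since the content is essentially bookkeeping. The one step that must be handled with care is the reversal identity $\langle\bar\sigma_0\rangle=\langle\sigma_0\rangle^*$, where the Hermitian relation $a_{ij}=a_{ji}^*$ has to be applied edge by edge so that reversing the orientation of the cycle matches conjugating the product term by term. I would also record the degenerate cases in which the $p$-block or $q$-block is empty (that is, $s=0$ or $t=0$), interpreting the corresponding arc products as $1$; in those cases $\sigma_0^+$ coincides with $\sigma_0$ up to the starting index, so the identity holds and the general argument covers them unchanged.
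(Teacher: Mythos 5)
Your proposal is correct and follows essentially the same route as the paper's proof: establish $\langle\bar\sigma_0\rangle=\langle\sigma_0\rangle^*$ (and likewise for the $+$ versions) so each side of \eqref{sigma_bar^+_permute} is a dual number, then split the cyclic product at $k$ into $w_1w_2$ versus $w_2w_1$ and invoke $Re(q_1q_2)=Re(q_2q_1)$ from Proposition~\ref{prop:prelim_dq}. Writing each side explicitly as $2\,Re(\cdot)$ is a minor streamlining of the paper's phrasing, not a different argument.
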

	\begin{proof}
		By direct computation, we have $\langle \sigma_0 \rangle=a_{p_1p_2}\cdots a_{p_sk}a_{kq_1}\cdots a_{q_tp_1}$ and $\langle \bar\sigma_0 \rangle=a_{p_1q_t}\cdots a_{q_1k}a_{kp_s}\cdots a_{p_2p_1}$. Therefore, we have
		$\langle \sigma_0 \rangle = \langle \bar\sigma_0 \rangle ^*$. Similarly, we have
		$\langle \sigma_0^+ \rangle = \langle \bar\sigma_0^+ \rangle ^*$.
		Thus, both the right  {and  left hand sides} of \eqref{sigma_bar^+_permute} are dual numbers.
		
		Furthermore, let $w_1=a_{p_1p_2}\cdots a_{p_sk}$, $w_2=a_{kq_1}\cdots a_{q_tp_1}$. By Proposition~\ref{prop:prelim_dq}, we have $Re(\langle \sigma_0 \rangle)=Re(w_1w_2) = Re(w_2w_1)=Re(\langle\sigma_0^+\rangle)$. Similarly, there is  $Re(\langle \bar\sigma_0 \rangle)=Re(\langle\bar\sigma_0^+\rangle)$.
		Thus, the real part  of the right hand side of \eqref{sigma_bar^+_permute} is equal to that of the    left hand side.
		{Combining} with the fact that  both the right and left hand sides of  \eqref{sigma_bar^+_permute} are  {dual numbers}, we derive \eqref{sigma_bar^+_permute}.
		
		This completes the proof.
	\end{proof}
	
	\begin{Lem}\label{Lem:switch_Cdet}
		Let $A=(a_{ij})$ be  a dual quaternion Hermitian matrix in $\dqset^{n\times n}$, and $P_{ij}$ be a switching matrix.  {Then}   $\Cdet(P_{ij}^*AP_{ij}) = \Cdet(A)$.
	\end{Lem}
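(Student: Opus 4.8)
The plan is to reduce the identity to a combinatorial statement about the defining sum \eqref{equ:Cdet} and then dispose of the resulting normal-form mismatch using Proposition~\ref{prop:prelim_dq} and Lemma~\ref{Lem:<s>2+=2}. First I would record that $P_{ij}$ is a real symmetric involution, so $P_{ij}^*=P_{ij}=P_{ij}^{-1}$ and $B:=P_{ij}^*AP_{ij}$ is again Hermitian with entries $b_{kl}=a_{\tau(k)\tau(l)}$, where $\tau=(ij)$ is the transposition swapping $i$ and $j$. For a cycle $\sigma_0=(m_1\cdots m_l)$ one checks directly that the contribution computed from $B$ is $b_{m_1m_2}\cdots b_{m_lm_1}=a_{\tau(m_1)\tau(m_2)}\cdots a_{\tau(m_l)\tau(m_1)}=\langle\tau\sigma_0\tau\rangle$, so passing from $A$ to $B$ amounts to relabelling indices by $\tau$. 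Since $\sigma\mapsto\tau\sigma\tau$ is a bijection of $S_n$ preserving the number $r$ of cycles (hence the sign $(-1)^{n-r}$), the whole sum $\Cdet(B)$ is, term by term, a sum of $A$-terms of the form $a_{\tau\sigma\tau}$.

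The difficulty is that the permutation $\pi=\tau\sigma\tau$ obtained this way is generally not written in the Chen normal form \eqref{equ:sigma_Cdet}: relabelling by $\tau$ may move the largest entry of a cycle out of the leading slot, may disturb the decreasing order $n_{11}>n_{21}>\cdots$ of the cycle leaders, and---when $\tau$ involves the index $n$---may even eject $n$ from the first cycle. Thus the $B$-contribution of $\sigma$ equals $a_\pi$ only after I rewrite $\pi$ into canonical form, and because dual-quaternion multiplication is noncommutative, both the intra-cycle rotations and the reordering of the cycle factors change the value of the product a priori. Restoring the canonical form without changing the sum is the heart of the argument.

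To handle the intra-cycle rotations I would invoke Lemma~\ref{Lem:<s>2+=2}: grouping each permutation with its reversal (the inverse permutation, whose cycles are the reversed cycles and which again lies in the Chen normal form, since each cycle maximum stays first and the leaders are unchanged), the identity \eqref{sigma_bar^+_permute} shows that the combination $\langle\sigma_0\rangle+\langle\bar\sigma_0\rangle$ is unchanged when the designated entry is rotated to the front of the cycle; this lets me always reinstate the cycle maximum as the leading element. For the reordering of cycle factors I would use Proposition~\ref{prop:prelim_dq}: since $A$ is Hermitian, $\langle\bar\sigma_0\rangle=\langle\sigma_0\rangle^*$, so the reversal-paired contributions are dual numbers, and a dual number $x_s+x_d\epsilon$ with $x_s,x_d\in\rset$ is central in $\dqset$ (reals commute with quaternions and $\epsilon$ is central). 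Consequently the paired cycle contributions commute, may be permuted back into decreasing-leader order, and the distinguished role of $n$ in \eqref{equ:sigma_Cdet} can be restored. Assembling these two reductions shows that $\Cdet(B)$ and $\Cdet(A)$ agree pair by pair, which is the claim.

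I expect the main obstacle to be the bookkeeping in this last step: the reversal pairing, the cycle rotations supplied by Lemma~\ref{Lem:<s>2+=2}, and the reordering of the cycle factors must be shown to be mutually compatible (here it helps that conjugation commutes with inversion, $\tau\sigma^{-1}\tau=(\tau\sigma\tau)^{-1}$, so the pairing is preserved by $\tau$), and the case in which $\tau$ moves the global maximum $n$ needs separate care because $n$ is singled out in \eqref{equ:sigma_Cdet}. By contrast, the dual-quaternion arithmetic should introduce no new difficulty beyond the quaternion case treated by Chen, since the real-part and modulus identities of Proposition~\ref{prop:prelim_dq} and the centrality of $\epsilon$ hold verbatim.
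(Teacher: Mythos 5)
Your proposal is correct and follows essentially the same route as the paper: it reduces the identity to the relabelling $\sigma\mapsto\tau\sigma\tau$, restores the Chen normal form by pairing each permutation with its reversal so that Lemma~\ref{Lem:<s>2+=2} supplies the intra-cycle rotations, and uses the fact that the reversal-paired contributions are dual numbers (hence central) to reorder the cycle factors. The only organizational difference is that the paper first reduces the general transposition to the case $P_{jn}$ via $P_{ij}=P_{jn}P_{in}P_{jn}$, which localizes the special role of $n$ that you flag as needing separate care.
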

	\begin{proof}
		If $i,j,n$ are mutually {distinct}, then we have $P_{ij} = P_{jn}P_{in}P_{jn}$. Hence,  {it suffices} to show $\Cdet(P_{jn}^*AP_{jn}) = \Cdet(A)$ for $j\neq n$.
		For any $\sigma=\sigma_1\cdots \sigma_r\in S_n^C$,  denote {the apostrophe of $\sigma$}
		$$\sigma' = \sigma |_{n\longleftrightarrow j}$$ as  the permutation of $S_n$ derived from interchanging $n$ and $j$ in $\sigma$.
		Since the cycle structures of $\sigma$ and $\sigma'$ are the same,   they have the same parity. Thus, we have
		\[\Cdet(P_{jn}^*AP_{jn}) = \sum_{\sigma\in {S_n^C}} (-1)^{n-r} a_{\sigma'}.\]
		Consider the following two cases.
		
		Case (a). Suppose that  $n$ and $j$ are in the same cycle factor. Then there {exist} nonnegative integers $s,t$ such that  $\sigma_1=(np_1\cdots p_sjq_1\cdots q_t)$.
		Let $\bar\sigma_1=(nq_t\cdots q_1jp_s\cdots p_1)$, $\delta_1=(nq_1\cdots q_tjp_1\cdots p_s)$, $\bar\delta_1=(np_s\cdots p_1jq_t\cdots q_1)$ be the first cycle factors of $\bar\sigma=\bar\sigma_1\sigma_2\cdots \sigma_r\in S_n^C$, $\delta=\delta_1\sigma_2\cdots \sigma_r\in S_n^C$, $\bar\delta=\bar\delta_1\sigma_2\cdots \sigma_r\in S_n^C$, respectively.
		Then by Lemma~\ref{Lem:<s>2+=2}, we have  $\langle \sigma_1 \rangle + \langle \bar\sigma_1 \rangle = 	\langle \delta_1' \rangle + \langle \bar\delta_1' \rangle$ and $\langle \delta_1 \rangle + \langle \bar\delta_1 \rangle = 	\langle \sigma_1' \rangle + \langle \bar\sigma_1' \rangle$. 	
		{Here, the apostrophe of a cycle denotes the  interchanging of $n$ and $j$.}
		Thus, we have
		\[\langle \sigma \rangle + \langle \bar\sigma \rangle + \langle \delta \rangle + \langle \bar\delta \rangle = 	\langle \sigma'\rangle + \langle \bar\sigma'\rangle + 	\langle \delta' \rangle + \langle \bar\delta' \rangle.\]
		Namely, we have built {the correspondence between} the permutation terms in $S_n^C$ {with the permutation} terms of $\Cdet(P_{jn}^*AP_{jn})$.

		Case (b). Suppose that  $n$ and $j$ are in two distinct cycle factors. Without loss of generality, let
		\[\sigma=(np_1\cdots p_s) \cdots(jq_1\cdots q_t) \cdots, \ s,t\ge 0,\]
		$j>\max_{i=1}^s p_i$, and $j>\max_{i=1}^t q_i$.
		Otherwise, {if $j>\max_{i=1}^s p_i$ or $j>\max_{i=1}^t q_i$ does not hold}, we can use the similar technique in Lemma~\ref{Lem:<s>2+=2} to get the corresponding permutations without changing the summation values.
		
		Let
		\[\bar \sigma=(np_s\cdots p_1) \cdots(jq_1\cdots q_t) \cdots,\]
		\[\delta=(np_1\cdots p_s) \cdots(jq_t\cdots q_1) \cdots,\]
		\[\bar \delta=(np_s\cdots p_1) \cdots(jq_t\cdots q_1) \cdots,\]
		{and $\Sigma$, $\bar\Sigma$, $\Delta$, $\bar \Delta$ be the cycles that interchanging $p_1\cdots p_s$ with $q_1\cdots q_t$ in $\sigma$, $\bar\sigma$, $\delta$, $\bar \delta$, respectively.}
		Then we have
		\begin{eqnarray*}
			&&	\langle\sigma\rangle + \langle\bar\sigma \rangle + \langle\delta \rangle + \langle\bar\delta \rangle\\
			&=& \left[ 	\langle np_1\cdots p_sn \rangle + 	\langle np_s\cdots p_1n \rangle \right]\cdots \left[ 	\langle jq_1\cdots q_tj \rangle + 	\langle jq_t\cdots q_1i \rangle \right]\cdots\\
			&=& \left[ 	\langle jq_1\cdots q_tj \rangle + 	\langle jq_t\cdots q_1j \rangle \right]\cdots \left[ 	\langle np_1\cdots p_sn \rangle + 	\langle np_s\cdots p_1n \rangle \right]\cdots\\
			&=&	{\langle\Sigma'\rangle + \langle\bar\Sigma' \rangle + \langle\Delta' \rangle + \langle\bar\Delta' \rangle.}
		\end{eqnarray*}
		Here, the second equality follows from the fact that the numbers in the square brackets are dual numbers, which are communicative with dual quaternion numbers.
		Similarly, we have $\langle\Sigma\rangle + \langle\bar\Sigma \rangle + \langle\Delta \rangle + \langle\bar\Delta \rangle=\langle\sigma'\rangle + \langle\bar\sigma' \rangle + \langle\delta' \rangle + \langle\bar\delta' \rangle$.
		Thus, we have built the correspondence between the factors in  $\Cdet(P_{ij}^*AP_{ij})$ and $\Cdet(A)$.

		Following this scheme, we could show $\Cdet(P_{ij}^*AP_{ij}) = \Cdet(A)$.	
		This completes the proof.
	\end{proof}

	\begin{Lem}\label{Lem:rowprod}
		Let $A=(a_{ij})$ be a dual quaternion Hermitian matrix in $\dqset^{n\times n}$, and $P_{n;\alpha}$ be a multiplication matrix.  {Then  $\Cdet(P_{n;\alpha}A) = \alpha\Cdet(A)$, $\Cdet(AP_{n;\alpha}) = \alpha\Cdet(A)$, and $\Cdet(P_{i;\alpha}^*AP_{i;\alpha}) = \alpha^*\alpha\Cdet(A)$ for all $i=1,\dots,n$.}
	\end{Lem}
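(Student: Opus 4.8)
The plan is to prove the three identities in turn, reducing the later two to the first. For $\Cdet(P_{n;\alpha}A) = \alpha\Cdet(A)$ I would argue directly from the definition \eqref{equ:Cdet}. Left-multiplication by $P_{n;\alpha}$ scales only the $n$-th row, so the entries of $B := P_{n;\alpha}A$ satisfy $b_{nj} = \alpha a_{nj}$ and $b_{kj} = a_{kj}$ for $k\neq n$. By the constraint \eqref{equ:sigma_Cdet}, every $\sigma\in S_n^C$ has $n$ as the first element of its first cycle $\sigma_1$, so $n$ occurs as a row index in $a_\sigma$ exactly once, namely in the leading factor of $\langle\sigma_1\rangle$. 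Hence $b_\sigma = \alpha\,a_\sigma$ with $\alpha$ prepended on the left, and summing over $S_n^C$ against the common real signs $(-1)^{n-r}$ yields $\Cdet(B) = \alpha\Cdet(A)$ immediately. I would note that this step uses no Hermitian structure, so it in fact holds for arbitrary $A$; I will record this because Part~3 needs it.

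The identity $\Cdet(AP_{n;\alpha}) = \alpha\Cdet(A)$ is where the real work lies, and I expect it to be the main obstacle. Now right-multiplication scales the $n$-th column, so in $C := AP_{n;\alpha}$ the index $n$ appears as a column index exactly once, in the \emph{last} factor of $\langle\sigma_1\rangle$, giving $c_\sigma = \langle\sigma_1\rangle\,\alpha\,\langle\sigma_2\rangle\cdots\langle\sigma_r\rangle$ with $\alpha$ buried in the interior of the product rather than at the front. To pull $\alpha$ out, I would pair each $\sigma$ with the permutation $\bar\sigma$ obtained by reversing only its first cycle while leaving $\sigma_2,\dots,\sigma_r$ fixed; since $n$ remains the global maximum and the leading element, $\bar\sigma\in S_n^C$, the map is an involution, and $\sigma,\bar\sigma$ share the cycle count $r$ and hence the sign. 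As in Lemma~\ref{Lem:<s>2+=2}, the Hermitian property gives $\langle\sigma_1\rangle = \langle\bar\sigma_1\rangle^*$, so by Proposition~\ref{prop:prelim_dq} the sum $\langle\sigma_1\rangle + \langle\bar\sigma_1\rangle$ is a dual number. Dual numbers are central, so for each pair the combined contribution $(-1)^{n-r}(\langle\sigma_1\rangle+\langle\bar\sigma_1\rangle)\alpha\langle\sigma_2\rangle\cdots$ equals $\alpha$ times $(-1)^{n-r}(\langle\sigma_1\rangle+\langle\bar\sigma_1\rangle)\langle\sigma_2\rangle\cdots$; fixed points of the involution, i.e.\ palindromic first cycles, are handled identically since then $\langle\sigma_1\rangle$ is itself dual. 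Summing over all pairs gives $\Cdet(C) = \alpha\Cdet(A)$. This commuting-past-$\alpha$ step is precisely the point where the middle insertion of $\alpha$ must be converted into a left factor, and it is where the argument genuinely differs from Part~1.

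Finally, for $\Cdet(P_{i;\alpha}^*AP_{i;\alpha}) = \alpha^*\alpha\Cdet(A)$ I would first settle $i=n$ and then transport to general $i$ by switching. For $i=n$, I would write $P_{n;\alpha}^*AP_{n;\alpha} = P_{n;\alpha^*}\,(AP_{n;\alpha})$ and apply Part~1 in its general, non-Hermitian form to peel off $\alpha^*$ on the left, then apply Part~2 to evaluate $\Cdet(AP_{n;\alpha}) = \alpha\Cdet(A)$, obtaining $\alpha^*\alpha\Cdet(A)$. For general $i$ I would use $P_{i;\alpha} = P_{in}P_{n;\alpha}P_{in}$ together with $P_{in}^* = P_{in}$, so that $P_{i;\alpha}^*AP_{i;\alpha} = P_{in}^*\bigl(P_{n;\alpha}^*A'P_{n;\alpha}\bigr)P_{in}$ with $A' := P_{in}^*AP_{in}$ again Hermitian; two applications of Lemma~\ref{Lem:switch_Cdet}, together with the $i=n$ case and $\Cdet(A') = \Cdet(A)$, then finish the proof. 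The only points needing care here are that each intermediate matrix is Hermitian so that Lemma~\ref{Lem:switch_Cdet} applies, and that Part~1 is invoked in its matrix-general form.
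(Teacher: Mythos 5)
Your proposal is correct and follows essentially the same route as the paper: pull $\alpha$ out directly from the leading factor $a_{np_1}$ for the row case, pair each $\sigma$ with the first-cycle reversal $\bar\sigma$ so that $\langle\sigma_1\rangle+\langle\bar\sigma_1\rangle$ is a dual number that lets $\alpha$ commute to the front for the column case, and reduce $P_{i;\alpha}$ to $P_{n;\alpha}$ via switching matrices combined with Lemma~\ref{Lem:switch_Cdet}. Your explicit remarks that the row identity holds for non-Hermitian matrices and that fixed points of the reversal involution are harmless are details the paper leaves implicit (the latter via its $\tfrac12$ pairing factor), but the substance is identical.
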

	\begin{proof}
		Let $\sigma=\sigma_1\cdots \sigma_r\in S_n^C$ be a permutation of $S_n$ satisfying \eqref{equ:sigma_Cdet} {and $\sigma_1=(np_1\cdots p_s)$.}
		For the $n$-th row multiplication, we have
		\begin{eqnarray*}
			\Cdet(P_{n;\alpha}A) &=& \sum_{\sigma\in S_n^C} (-1)^{n-r} \alpha a_{np_1}  a_{p_1p_2}\cdots a_{p_sn}\langle \sigma_2 \rangle\cdots \langle \sigma_r \rangle  \\
			&=&  \alpha \left(\sum_{\sigma\in {S_n^C}} (-1)^{n-r}\langle \sigma_1 \rangle\cdots \langle \sigma_r \rangle \right)  \\
			&=&  \alpha \Cdet(A).
		\end{eqnarray*}

		{In addition,}  there exists  $\bar\sigma_1=(n p_s\cdots  p_1)$ such that $\bar\sigma=\bar\sigma_1 \sigma_2\cdots \sigma_r\in S_n^C$ and $\langle \sigma_1 \rangle + \langle \bar\sigma_1\rangle \in \drset$.
		For the $n$-th column multiplication, we have
		\begin{eqnarray*}
			\Cdet(AP_{n;\alpha}) &=&\sum_{\sigma\in S_n^C} (-1)^{n-r} a_{np_1}  a_{p_1p_2}\cdots a_{p_sn} \alpha\langle \sigma_2 \rangle\cdots \langle \sigma_r \rangle  \\
			&=& \sum_{\sigma\in {S_n^C}} (-1)^{n-r}\langle \sigma_1 \rangle \alpha  \langle \sigma_2 \rangle \cdots \langle \sigma_r \rangle\\
			&=& \frac12 \sum_{\sigma\in {S_n^C}} (-1)^{n-r}  \left[  \langle \sigma_1 \rangle+\langle \bar\sigma_1 \rangle\right]\alpha\langle \sigma_2 \rangle\cdots \langle \sigma_r \rangle   \\
			&=& \frac{\alpha}2 \sum_{\sigma\in {S_n^C}} (-1)^{n-r}  \left[ \langle \sigma_1 \rangle+\langle \bar\sigma_1 \rangle\right]\langle \sigma_2 \rangle\cdots \langle \sigma_r \rangle   \\
			&=&  \alpha \Cdet(A).
		\end{eqnarray*}
		Here, the third equality is true because  we have repeated all permutations twice and the fourth equality is true since the number in the square bracket is a dual number.
		
		For the $i$-th row and column multiplications, we have
		\begin{eqnarray*}
			\Cdet(P_{i;\alpha}^*AP_{i;\alpha}) &=& \Cdet(P_{in}^*P_{i;\alpha}^*AP_{i;\alpha}P_{in})\\
			&=& \Cdet\left((P_{i;\alpha}P_{in})^*A(P_{i;\alpha}P_{in})\right)\\
			&=& \Cdet(P_{n;\alpha}^*AP_{n;\alpha})\\
			&=&  \alpha^*\alpha \Cdet(A).
		\end{eqnarray*}
		
		This completes the proof.
	\end{proof}
	
	\begin{Lem}\label{Lem:An_row_ai=0}
		Let $A=(a_{ij})$ be a dual quaternion Hermitian matrix in $\dqset^{n\times n}$.  Then
		\[\Cdet(A_{n\cdot}(a_{k\cdot}))=0,\quad \forall\, k=1,\dots,n-1.\]
		Furthermore, {we have}	$\Cdet(A_{n\cdot}(\alpha a_{k\cdot}))=0$.
		{In other words, if we replace the $n$-th row of a dual quaternion Hermitian matrix by the $k$-th row (possibly multiplied by a dual quaternion number $\alpha$ on the left), then the Chen determinant of the result matrix is zero.}
	\end{Lem}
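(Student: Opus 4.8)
The plan is to first remove the scalar $\alpha$ and then to prove $\Cdet(A_{n\cdot}(a_{k\cdot}))=0$ by a sign-reversing grouping of the permutations in $S_n^C$. For the scalar, note that $A_{n\cdot}(\alpha a_{k\cdot})=P_{n;\alpha}A_{n\cdot}(a_{k\cdot})$; the computation establishing the identity $\Cdet(P_{n;\alpha}M)=\alpha\Cdet(M)$ in Lemma~\ref{Lem:rowprod} uses only that $n$ leads every cycle of $S_n^C$ and never the Hermitian property of $M$, so it applies with $M=A_{n\cdot}(a_{k\cdot})$ and reduces the claim to the case $\alpha=1$.

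Set $B=A_{n\cdot}(a_{k\cdot})$, so that $b_{nj}=a_{kj}$ for every $j$ while $b_{ij}=a_{ij}$ for $i\neq n$; in particular rows $k$ and $n$ of $B$ are identical. The decisive observation is that every monomial $\langle\sigma\rangle$ in $\Cdet(B)$ can be rewritten purely in entries of the Hermitian matrix $A$: the single factor contributed by row $n$ is $b_{n,\sigma(n)}=a_{k,\sigma(n)}$, and tracing the cycle of $\sigma$ that contains $n$ this substitution closes the arc returning to $k$ into a genuine closed cycle $W$ of $A$ based at $k$ and leaves a genuine $A$-path $V$ running from $k$ to $n$. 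Hence $\langle\sigma\rangle$ factors as a product of closed $A$-cycles — one of which is the loop $W$ through $k$ — together with the single $k\!\to\!n$ path $V$.

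The cancellation comes from combining two pairings. The first is $\sigma\mapsto\sigma\circ(k\,n)$, which swaps the images of $k$ and $n$ and thereby toggles whether $k$ and $n$ share a cycle, i.e. whether the loop $W$ is attached to the path $V$ (one cycle) or detached as a cycle of its own (two cycles); composing with a transposition changes the number $r$ of cycles by exactly one, so the two configurations carry opposite signs $(-1)^{n-r}$. The second pairing reverses the loop $W$; it preserves $r$, and since $A$ is Hermitian on the indices different from $n$ it replaces $W$ by $W^{*}$. Adding the four permutations in such an orbit produces the factor $W+W^{*}=2\,Re(W)$, which is a dual number and hence central by Proposition~\ref{prop:prelim_dq}; moreover the attached and detached forms differ only by a cyclic rotation of $W$, whose real part is unchanged because $Re(xy)=Re(yx)$ (Proposition~\ref{prop:prelim_dq}). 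Once $W$ is central it slides freely past $V$ and the remaining cycles, so the two oppositely-signed contributions become equal and cancel. When the loop has length at most two — for instance $\sigma(n)=k$, giving the diagonal entry $a_{kk}$, or $k$ a fixed point — $W$ is already central (a diagonal entry or a magnitude $|a_{\cdot}|^{2}$) and the reversal pairing is vacuous.

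The hard part is the bookkeeping rather than any single identity. One must verify that these two maps partition $S_n^C$ into genuine disjoint orbits with no unintended fixed points, and — most delicately — that when the loop $W$ detaches it is reinserted among the other cycles according to its largest index, as the canonical ordering of $S_n^C$ demands, without altering the value of the noncommutative product. Centrality of $W+W^{*}$ is exactly what guarantees this invariance, and carrying the verification through the two main configurations ($k,n$ in the same cycle versus in different cycles) together with the degenerate short-loop cases is where the bulk of the effort goes; the supporting tools are Lemma~\ref{Lem:<s>2+=2} and the real-part identities of Proposition~\ref{prop:prelim_dq}.
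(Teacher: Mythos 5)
Your proposal is correct and follows essentially the same route as the paper: the paper likewise reduces to $\alpha=1$ via the row-multiplication identity, partitions $S_n^C$ according to whether $k$ and $n$ lie in the same cycle (its sets $S_1,S_2,S_3$), and cancels via the four-element orbits $\{\sigma,\bar\sigma,\sigma^+,\bar\sigma^+\}$, which are exactly your two commuting pairings (composition with the transposition $(k\,n)$ to merge or split the loop through $k$, and reversal of that loop so that $W+W^{*}$ becomes a central dual number), with Lemma~\ref{Lem:<s>2+=2} handling the rebasing of the loop at $k$. The only difference is presentational: you phrase the orbit structure via involutions while the paper writes out the cycle words explicitly.
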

	\begin{proof}
		%Consider the following three cases.
		{Let $s,t$ be nonnegative integers.}  We first divide all permutations in $S_n^C$ into three sets  depending on the location of $k$ as follows.
		
		(i) $S_1=\{\sigma_1(k): \sigma_1(k) = (np_1\cdots p_s) \cdots (k q_1\cdots q_t)\cdots\}$;
		
		(ii) $S_2=\{\sigma_2(k): \sigma_2(k)  = (np_1\cdots p_s) \cdots (u_1\cdots u_l k q_1\cdots q_t)\cdots\}$;
		
		(iii) $S_3=\{\sigma_3(k): \sigma_3(k)  = (np_1\cdots p_sk q_1\cdots q_t)\cdots\}$.
		
		We show this lemma by proving that for any permutation  $\sigma$ in the first two cases, there is a permutation  $\bar\sigma$ in the same  {case}  and permutations  $\sigma^+$ and  $\bar\sigma^+$ in the third case  such that 	
		{\begin{equation}\label{equ:ss=ss}
				s(\sigma)\langle \sigma \rangle + s(\bar\sigma)\langle  \bar\sigma \rangle= -s(\sigma^+)\langle \sigma^+ \rangle -s(\bar\sigma^+)\langle \bar\sigma^+ \rangle.
		\end{equation}}
		
		Case (i). Let $\sigma\in S_1$ and
		$$\bar\sigma = (np_1\cdots p_s) \cdots (k q_t\cdots q_1)\cdots,$$
		$$\sigma^+ = (n q_1\cdots q_t kp_1\cdots p_s) \cdots, \ \ \bar\sigma^+ = (nq_t\cdots q_1kp_1\cdots p_s)  \cdots.$$
		Then we have
		\begin{eqnarray*}
			\langle \sigma \rangle + \langle  \bar\sigma \rangle &=&  \langle np_1\cdots p_s n\rangle \cdots  \langle k q_1\cdots q_t k\rangle \cdots + \langle np_1\cdots p_s n\rangle \cdots  \langle k q_t\cdots q_1 k\rangle \cdots\\
			&=&  \langle kp_1\cdots p_s n\rangle \cdots  \langle k q_1\cdots q_t k\rangle \cdots + \langle kp_1\cdots p_s n\rangle \cdots  \langle k q_t\cdots q_1 k\rangle\cdots \\
			&=&  \langle k q_1\cdots q_t kp_1\cdots p_s n\rangle \cdots   + \langle k q_t\cdots q_1 kp_1\cdots p_s n\rangle \cdots  \\
			&=&  \langle n q_1\cdots q_t kp_1\cdots p_s n\rangle \cdots   + \langle n q_t\cdots q_1 kp_1\cdots p_s n\rangle \cdots\\
			&=& \langle \sigma^+\rangle + \langle  \bar\sigma^+ \rangle ,
		\end{eqnarray*}
		where the second and the {fourth} equalities follow from the fact that the last row of $A$ is {replaced by} the $k$-th row, and the third equality follows from the fact that $ \langle k q_1\cdots q_t k\rangle+ \langle k q_t\cdots q_1 k\rangle$ is a dual number, {which} is communicative  with  dual quaternion numbers {and can be moved    to the beginning of the products.}
		Furthermore, suppose $\sigma$ has $r$ distinct cycles.   Then $s(\sigma)=s(\bar\sigma)=(-1)^{n-r}$ and $s(\sigma^+)=s(\bar\sigma^+)=(-1)^{n-r+1}$. Thus, \eqref{equ:ss=ss} holds.
		
		Case (ii).  Let  $\sigma\in S_2$ and
		$$\bar\sigma = (np_1\cdots p_s) \cdots (u_1 q_t\cdots q_1ku_l\cdots u_2)\cdots,$$
		$$\sigma^+ = (n q_1\cdots q_t u_1u_2\cdots u_l kp_1\cdots p_s) \cdots,$$
		and
		$$\sigma^+ = (nu_l\cdots {u_1} q_t\cdots q_1kp_1\cdots p_s)  \cdots.$$
		It follows from Lemma~\ref{Lem:<s>2+=2} that
		\begin{eqnarray}
			&&\langle u_1\cdots u_l k q_1\cdots q_t u_1\rangle + \langle u_1 q_t\cdots q_1ku_l\cdots u_1 \rangle \nonumber \\
			&=& \langle k q_1\cdots q_t u_1u_2\cdots u_l k\rangle + \langle ku_l\cdots {u_1}q_t\cdots q_1k\rangle. \label{equ:kforward}
		\end{eqnarray}
		{Combining} this with the first case derives \eqref{equ:ss=ss}.
		
		The union of all $\sigma^+$ and $\bar\sigma^+$ defined above is equal to $S_3$. In fact, for any permutation $\sigma \in S_3$, if $k\ge p_i$ for $i=1,\dots,s$, then it is corresponding to a permutation in the first case. Otherwise, it is corresponding to a permutation in the second case.
		
		{Together}, we have
		\begin{eqnarray*}
			\Cdet(A_{n\cdot}(a_{k\cdot}))&=& \sum_{\sigma\in S_1} s(\sigma)\langle \sigma \rangle + \sum_{\sigma\in S_2} s(\sigma)\langle \sigma \rangle +\sum_{\sigma\in S_3} s(\sigma)\langle \sigma \rangle\\
			&=&-\sum_{\sigma\in S_3} s(\sigma)\langle \sigma \rangle +\sum_{\sigma\in S_3} s(\sigma)\langle \sigma \rangle\\
			&=& 0.
		\end{eqnarray*}
		
		Furthermore, we can verify that $\Cdet(A_{n\cdot}(\alpha a_{k\cdot})) = \alpha \Cdet(A_{n\cdot}( a_{k\cdot})) =0$.
		This completes the proof.
	\end{proof}

	\begin{Lem}\label{Lem:An_col_ai=0}
		Let $A=(a_{ij})$ be a dual quaternion Hermitian matrix in $\dqset^{n\times n}$.   Then  we have   $\Cdet(A_{\cdot n}(a_{\cdot k}\alpha))=0$ for all $k=1,\dots,n$ and $\alpha\in\dqset$.
	\end{Lem}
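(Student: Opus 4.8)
The plan is to avoid setting up a new combinatorial partition from scratch and instead reduce this column statement to the row statement of Lemma~\ref{Lem:An_row_ai=0} by passing to the conjugate transpose. Write $C = A_{\cdot n}(a_{\cdot k}\alpha)$, so that $C_{in}=a_{ik}\alpha$ and $C_{ij}=a_{ij}$ for $j\neq n$. A direct entrywise check using $a_{ij}^*=a_{ji}$ (valid since $A^*=A$) shows that the $n$-th row of $C^*$ is $(C_{jn})^* = (a_{jk}\alpha)^* = \alpha^* a_{kj}$, while every other entry of $C^*$ coincides with the corresponding entry of $A$. Hence
\[ C^* = A_{n\cdot}(\alpha^* a_{k\cdot}), \]
and the $\alpha$-version of Lemma~\ref{Lem:An_row_ai=0} (with the multiplier $\alpha^*$) gives $\Cdet(C^*)=0$ for $1\le k\le n-1$. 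Observe that this move disposes of the right multiplier $\alpha$ for free: the appended $\alpha$ on the column becomes a left multiplier $\alpha^*$ on the row, which is exactly the situation already handled in Lemma~\ref{Lem:An_row_ai=0}. I would also note that the borderline index $k=n$ is genuinely not a vanishing case, since there $A_{\cdot n}(a_{\cdot n}\alpha)=AP_{n;\alpha}$ and $\Cdet(AP_{n;\alpha})=\alpha\Cdet(A)$ by Lemma~\ref{Lem:rowprod}; so throughout I take $1\le k\le n-1$, matching the range in Lemma~\ref{Lem:An_row_ai=0}.

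The remaining, and in my view hardest, step is to transfer the vanishing of $\Cdet(C^*)$ to the vanishing of $\Cdet(C)$. The obstacle is that $\Cdet(M^*)\neq\Cdet(M)^*$ for a general non-Hermitian $M$: conjugation reverses the order of the factors inside each cycle product, and the resulting cycle-values need not commute. To control this I would use the cycle-reversal bijection $\sigma\mapsto\bar\sigma$ on $S_n^C$ (reverse the direction of every cycle, in the spirit of $\bar\sigma_0$ in Lemma~\ref{Lem:<s>2+=2}), which preserves the number of cycles $r$ and hence the sign $(-1)^{n-r}$, together with the identity $\langle\sigma_i\rangle_{M^*} = (\langle\bar\sigma_i\rangle_M)^*$, which follows directly from the definition of $\langle\cdot\rangle$ and the fact that $(pq)^*=q^*p^*$. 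Summing and reindexing $\bar\sigma\mapsto\sigma$ then expresses $\Cdet(M^*)$ as a signed sum of forward-ordered products of conjugated cycle-values; the order mismatch with $\Cdet(M)^*$ is precisely what must be neutralized.

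The clean way to neutralize it is to exploit that the cancellation yielding $\Cdet(C^*)=0$ in Lemma~\ref{Lem:An_row_ai=0} is assembled entirely from the dual-number identities of Lemma~\ref{Lem:<s>2+=2} and Proposition~\ref{prop:prelim_dq}, and these are invariant under both conjugation and cycle reversal. Concretely, I would repeat the three-way partition of $S_n^C$ according to the position of the index $k$, but now track the unique factor carrying column index $n$ (equivalently, carrying $\alpha$): this factor \emph{closes} the cycle through $n$, so it is the last factor of the $n$-cycle rather than the first, and the reconnecting chains terminate at $k$ instead of starting at $k$. Because $\alpha$ then occupies the same relative position inside each of the four paired terms $\sigma,\bar\sigma,\sigma^+,\bar\sigma^+$, the mirror image of \eqref{equ:ss=ss} holds verbatim with $\alpha$ inserted, and summing over the partition yields $\Cdet(C)=0$. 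An equivalent bookkeeping is to apply the reversal bijection term-by-term to the cancellation already established for $C^*$, the only thing to verify being that each paired sum there is a dual number (by the conjugate-pairing $\langle\sigma_0\rangle=\langle\bar\sigma_0\rangle^*$ established in the proof of \eqref{sigma_bar^+_permute}), so that reversing the factor order leaves its value unchanged. Either route reduces the result to the already-proved row lemma, which is why I would present the conjugate-transpose reduction as the backbone and the reversal-plus-dual-number symmetry as the tool that makes the transfer rigorous.
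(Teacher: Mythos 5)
Your proposal is correct in substance, and its ``concrete'' realization is in fact the paper's own proof: the paper establishes the lemma by repeating the three-way partition $S_1,S_2,S_3$ of Lemma~\ref{Lem:An_row_ai=0}, with the reconnecting chains now appended after the segment $p_sk$ of the $n$-cycle so that the modified entry carrying $\alpha$ stays in the closing position of the cycle through $n$, and then verifying the mirror image of \eqref{equ:ss=ss}. So your fallback route lands exactly on the published argument. Your preferred backbone --- reducing to the row lemma via $C^*=A_{n\cdot}(\alpha^*a_{k\cdot})$ --- is genuinely different from the paper and is attractive because it reuses Lemma~\ref{Lem:An_row_ai=0} wholesale; the identification $C^*=A_{n\cdot}(\alpha^*a_{k\cdot})$ is verified correctly, and your observation about the index range is also right: for $k=n$ one has $A_{\cdot n}(a_{\cdot n}\alpha)=AP_{n;\alpha}$ and $\Cdet(AP_{n;\alpha})=\alpha\Cdet(A)$ by Lemma~\ref{Lem:rowprod}, so the paper's ``for all $k=1,\dots,n$'' should read $k=1,\dots,n-1$, matching the row lemma.

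The step you must tighten is the transfer from $\Cdet(C^*)=0$ to $\Cdet(C)=0$. As written, ``apply the reversal bijection term-by-term'' plus ``each paired sum is a dual number'' is not yet a proof: the paired sum coming from the cycle through $n$ (the one carrying $\alpha$) is \emph{not} a dual number, and conjugating $\langle\sigma_1\rangle\cdots\langle\sigma_r\rangle$ reverses the order of the $r$ disjoint cycle factors, not merely the order within each cycle. The clean repair is to group $S_n^C$ into classes under reversal of every cycle of length at least three. Within a class, each factor from a cycle avoiding $n$ sums to $q+q^*\in\drset$ by Proposition~\ref{prop:prelim_dq} (cycles of length one or two already give dual numbers), hence is central and self-conjugate; only the factor from the $n$-cycle, say $U_\sigma=\langle\sigma_1\rangle_C+\langle\bar\sigma_1\rangle_C$, is a general dual quaternion, and since reversal is an involution one gets $\langle\sigma_1\rangle_{C^*}+\langle\bar\sigma_1\rangle_{C^*}=U_\sigma^*$. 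Then $\Cdet(C^*)=\sum(-1)^{n-r}U_\sigma^*d_\sigma$ with $d_\sigma\in\drset$, and conjugating the identity $\Cdet(C^*)=0$, using $d_\sigma^*=d_\sigma$ and centrality of $d_\sigma$, yields $\Cdet(C)=\sum(-1)^{n-r}U_\sigma d_\sigma=0$. With that grouping made explicit your reduction is complete and arguably shorter than the paper's case analysis; without it the conjugate-transpose route has a real gap, and you are forced back onto the direct partition argument that the paper actually carries out.
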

	{\begin{proof}
			Consider the three {cases} $S_1$, $S_2$ {and} $S_3$   in {the proof of} Lemma~\ref{Lem:An_row_ai=0}. We {now} prove  that for any permutation  $\sigma \in S_1 \cup S_2$, there is a permutation  $\bar\sigma$ in the same  {case}  and permutations  $\sigma^+, \bar\sigma^+\in S_3$ such that \eqref{equ:ss=ss} holds.
			
			Case (i). Let $\sigma\in S_1$ and
			$$\bar\sigma = (np_1\cdots p_s) \cdots (k q_t\cdots q_1)\cdots,$$
			$$\sigma^+ = (n p_1\cdots p_skq_1\cdots q_t) \cdots, \ \ \bar\sigma^+ = (n p_1\cdots p_skq_t\cdots q_1)  \cdots.$$
			Then we have
			\begin{eqnarray*}
				\langle \sigma \rangle + \langle  \bar\sigma \rangle &=&  \langle np_1\cdots p_s n\rangle \cdots  \langle k q_1\cdots q_t k\rangle \cdots + \langle np_1\cdots p_s n\rangle \cdots  \langle k q_t\cdots q_1 k\rangle \\
				&=&  \langle np_1\cdots p_s k\rangle \alpha \cdots  \langle kq_1\cdots q_t k\rangle \cdots + \langle np_1\cdots p_s k\rangle \alpha \cdots  \langle k q_t\cdots q_1 k\rangle \\
				&=&  \langle np_1\cdots p_sk q_1\cdots q_t k\rangle \alpha \cdots   + \langle np_1\cdots p_sk q_t\cdots q_1 k\rangle \alpha\cdots  \\
				&=&  \langle np_1\cdots p_sk q_1\cdots q_t n\rangle\cdots   + \langle np_1\cdots p_sk q_t\cdots q_1 n\rangle\cdots  \\
				&=& \langle \sigma^+\rangle + \langle  \bar\sigma^+ \rangle ,
			\end{eqnarray*}
			where the second and the fourth equalities follow from the fact that  the last column of $A$ is replaced by  the $k$-th row multiplied by $\alpha$, and the third equality follows from  the fact that  $ \langle k q_1\cdots q_t k\rangle+ \langle k q_t\cdots q_1 k\rangle$ is a dual number and is communicative with any dual quaternion numbers.
			Furthermore, suppose $\sigma$ has $r$ distinct cycles.   Then $s(\sigma)=s(\bar\sigma)=(-1)^{n-r}$ and $s(\sigma^+)=s(\bar\sigma^+)=(-1)^{n-r+1}$. Thus, \eqref{equ:ss=ss} holds.

			Case (ii).  Let  $\sigma\in S_2$ and
			$$\bar\sigma = (np_1\cdots p_s) \cdots (u_1 q_t\cdots q_1ku_l\cdots u_2)\cdots,$$
			$$\sigma^+ = (n p_1\cdots p_s k q_1\cdots q_t u_1\cdots u_l n) \cdots,$$
			and
			$$\sigma^+ = (n p_1\cdots p_s k  u_l\cdots u_1 q_t\cdots q_1 n)  \cdots.$$
			Combining \eqref{equ:kforward}   with the first case derives \eqref{equ:ss=ss}.
			
			The rest of the proof is similar to  that of Lemma~\ref{Lem:An_row_ai=0} and we omit the details here.
			
			This completes the proof.
		\end{proof}
	}
	\begin{Lem}\label{Lem:add_Cdet}
		Let $A=(a_{ij})$ be a dual quaternion Hermitian matrix in $\dqset^{n\times n}$, and $P_{ij;\alpha}$ be an addition matrix.    {Then}    $\Cdet(P_{ij;\alpha}^*AP_{ij;\alpha}) = \Cdet(A)$.
	\end{Lem}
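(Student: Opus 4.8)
The plan is to first normalize the operation using switching invariance, and then to expand the Chen determinant along the $n$-th row and column so that the whole congruence can be treated in one stroke.

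First I would use Lemma~\ref{Lem:switch_Cdet} to reduce to the case $i=n$, that is, to $B:=P_{nj;\alpha}^*AP_{nj;\alpha}$ with $1\le j\le n-1$. Writing $P_{ij;\alpha}=P_{in}P_{nj';\alpha}P_{in}$ for a suitable index $j'\neq n$ (where $P_{in}$ is a switching matrix, $P_{in}^*=P_{in}$, $P_{in}^2=I_n$), one gets $P_{ij;\alpha}^*AP_{ij;\alpha}=P_{in}\big(P_{nj';\alpha}^*A'P_{nj';\alpha}\big)P_{in}$ with $A'=P_{in}AP_{in}$ Hermitian, and Lemma~\ref{Lem:switch_Cdet} strips off the outer switching. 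A direct computation then shows that $B$ is Hermitian, that it differs from $A$ only in its $n$-th row and $n$-th column, namely $b_{np}=a_{np}+\alpha^*a_{jp}$ and $b_{qn}=a_{qn}+a_{qj}\alpha$ for $p,q\neq n$ and $b_{nn}=a_{nn}+\alpha^*a_{jn}+a_{nj}\alpha+\alpha^*a_{jj}\alpha$, and crucially that the principal submatrix $A^{nn}$ is left unchanged.

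Next I would expand $\Cdet$ along row and column $n$. Since in \eqref{equ:Cdet} the index $n$ is fixed as the leader of the first cycle, every term $a_\sigma$ contains exactly one factor from row $n$ (the leftmost factor) and exactly one factor from column $n$, while all the remaining factors and the sign depend only on $A^{nn}$. Grouping the terms accordingly yields the bilinear expansion
\[
\Cdet(A)=a_{nn}D+\sum_{p=1}^{n-1}\sum_{q=1}^{n-1}a_{np}\,T_{pq}\,a_{qn},
\]
where $D=\Cdet(A^{nn})$ and the coefficients $D,T_{pq}$ depend only on $A^{nn}$. Because the same $A^{nn}$ appears in $B$ and in all the modified matrices below, the same $D,T_{pq}$ serve for every one of them.

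Finally, substituting the entries of $B$ into this expansion and subtracting $\Cdet(A)$, the inner sums that appear are recognized through the zero lemmas: $\sum_{p,q}a_{np}T_{pq}a_{qj}=-a_{nj}D$ is the statement $\Cdet(A_{\cdot n}(a_{\cdot j}))=0$ from Lemma~\ref{Lem:An_col_ai=0}; $\sum_{p,q}a_{jp}T_{pq}a_{qn}=-a_{jn}D$ is $\Cdet(A_{n\cdot}(a_{j\cdot}))=0$ from Lemma~\ref{Lem:An_row_ai=0}; and the cross sum $\sum_{p,q}a_{jp}T_{pq}a_{qj}=-a_{jj}D$ comes from applying Lemma~\ref{Lem:An_row_ai=0} to the auxiliary matrix $C$ obtained from $A$ by replacing both its $n$-th row and $n$-th column by the $j$-th row and $j$-th column (and $a_{nn}$ by $a_{jj}$). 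This $C$ is Hermitian precisely because $A$ is, and its $n$-th and $j$-th rows coincide, so its Chen determinant vanishes. After these substitutions the leftover terms collapse to $(a_{nj}+\alpha^*a_{jj})(\alpha D-D\alpha)$, which is zero because $D=\Cdet(A^{nn})$ is the Chen determinant of a Hermitian matrix, hence a dual number (by the pairing $\langle\sigma\rangle=\langle\bar\sigma\rangle^*$ together with Proposition~\ref{prop:prelim_dq}), and therefore central. This gives $\Cdet(B)=\Cdet(A)$. The main obstacle is that the two naive one-sided reductions, performing the row operation and then the column operation, leave a non-Hermitian intermediate matrix to which the zero lemmas cannot be applied; the bilinear expansion circumvents this by handling the congruence at once, and the only genuinely new ingredient it needs is the vanishing of the cross sum, supplied by the Hermitian companion $C$ and by the centrality of $\Cdet(A^{nn})$.
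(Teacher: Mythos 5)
Your overall strategy is the paper's: reduce by switching to the case where only the $n$-th row and column are perturbed, expand the Chen determinant over the modified entries, and kill the extra terms with Lemmas~\ref{Lem:An_row_ai=0} and~\ref{Lem:An_col_ai=0} together with the Hermitian ``doubly replaced'' companion matrix (your $C$ is exactly the paper's $B(1)$, whose $n$-th row equals its $j$-th row). You have correctly identified every vanishing statement that is needed. The gap is in the step where you package the expansion as a bilinear form $\Cdet(A)=a_{nn}D+\sum_{p,q}a_{np}\,T_{pq}\,a_{qn}$ with \emph{scalar} middle coefficients $T_{pq}$ depending only on $A^{nn}$. No such scalars exist over the quaternions: a generic term of $\Cdet$ with first cycle $(np_1\cdots p_s)$ has the form $(-1)^{n-r}\,a_{np_1}\,u\,a_{p_sn}\,v$ with $u=a_{p_1p_2}\cdots a_{p_{s-1}p_s}$ sitting between the row-$n$ and column-$n$ entries but $v=\langle\sigma_2\rangle\cdots\langle\sigma_r\rangle$ sitting to the \emph{right} of the column-$n$ entry, and the individual $v$'s are non-central quaternions. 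Consequently the identities you rely on, such as $\sum_{p,q}a_{np}T_{pq}(a_{qj}\alpha)=\bigl(\sum_{p,q}a_{np}T_{pq}a_{qj}\bigr)\alpha=-a_{nj}D\alpha$, require commuting $\alpha$ past every trailing factor $v$, which is not justified. Your final answer survives only because the correct values of these sums (obtained from the lemmas with $\alpha$ kept \emph{inside} the replaced row or column) happen to differ from your values by commutators with the central dual number $D$; but that coincidence is exactly what needs proving, so the ``leftover term'' $(a_{nj}+\alpha^*a_{jj})(\alpha D-D\alpha)$ is an artifact of an unjustified manipulation rather than a computed remainder.

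The repair is what the paper does, and it needs no bilinear form at all: since each entry of the new $n$-th row and column is a sum of two terms, distributivity alone splits $\Cdet(P_{in;\alpha}^*AP_{in;\alpha})$ into the four whole determinants $\Cdet(A)+\Cdet\bigl(A_{n\cdot}(\alpha^* a_{i\cdot})\bigr)+\Cdet\bigl(A_{\cdot n}(a_{\cdot i}\alpha)\bigr)+\Cdet\bigl(B(\alpha)\bigr)$. The second and third vanish because Lemmas~\ref{Lem:An_row_ai=0} and~\ref{Lem:An_col_ai=0} are stated precisely with the scalar already inside the replaced row (multiplied on the left) or column (multiplied on the right), so no factor ever has to be pulled out past the trailing cycles. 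The fourth is handled by writing $B(\alpha)=P_{n;\alpha}^*B(1)P_{n;\alpha}$ and invoking Lemma~\ref{Lem:rowprod} to get $\Cdet(B(\alpha))=\alpha^*\alpha\,\Cdet(B(1))=0$, which replaces your appeal to the centrality of $\Cdet(A^{nn})$. With the expansion done additively in this way, no leftover commutator ever arises and your argument closes.
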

	\begin{proof}
		Without loss of generality, we suppose $i<j$.
		Since $P_{ij;\alpha}=P_{jn}P_{in;\alpha}P_{jn}$,   we only need to show $\Cdet(P_{in;\alpha}^*AP_{in;\alpha}) = \Cdet(A)$. By direct computation, we have
		\begin{eqnarray*}
			&&\Cdet\left(P_{in;\alpha}^*AP_{in;\alpha}\right)  \nonumber\\
			&=&\Cdet\left(A\right) +\Cdet\left(A_{n\cdot}(\alpha^* a_{i\cdot})\right) +\Cdet\left(A_{\cdot n}(a_{\cdot i}\alpha)\right) +\Cdet\left(B(\alpha)\right),
		\end{eqnarray*}
		where $B(\alpha)=(b_{kl}(\alpha))$ satisfies
		{\[b_{kl}(\alpha)= \left\{\begin{array}{ll}
				\alpha^* a_{il}, & \text{ if } k=n \text{ and } l\neq n; \\
				a_{ki}\alpha, & \text{ if } l=n \text{ and } k\neq n; \\
				\alpha^* a_{ii}\alpha, & \text{ if } k=l=n \\
				a_{kl}, & \text{ otherwise. }
			\end{array}\right.\]}
		In {the} other words, $B(\alpha) = P_{n;\alpha}^*B(1)P_{n;\alpha}$.
		Thus, it follows from  Lemmas \ref{Lem:rowprod} and \ref{Lem:An_row_ai=0} that $\Cdet\left(B(\alpha)\right) = \alpha^*\alpha \Cdet(B(1)) = 0$.
		By  Lemmas \ref{Lem:An_row_ai=0} and \ref{Lem:An_col_ai=0}, we have $\Cdet\left(A_{n\cdot}(\alpha^* a_{i\cdot})\right) =\Cdet\left(A_{\cdot n}(a_{\cdot i}\alpha)\right)=0$.
		{Therefore}, we have $\Cdet\left(P_{in;\alpha}^*AP_{in;\alpha}\right) =\Cdet(A)$.
		This completes the proof.
	\end{proof}

	{Recently, Wang et al. \cite{WLWXZ24} proposed   the dual quaternion LU decomposition and  the partial pivoting dual quaternion LU	decomposition.
		Specifically, given a dual quaternion matrix  $A\in\dqset^{n\times n}$, there is a permutation matrix $P$,  a unit lower triangular dual quaternion matrix $L\in\dqset^{n\times n}$,
		and   an upper triangular dual quaternion matrix  $U\in\dqset^{n\times n}$ such that
		\[PA=LU.\]
		Based on this formulation, we have the following lemma.
		\begin{Lem}\label{Lem:u_prod}
			Suppose $A\in\dqset^{n\times n}$ is a dual  {quaternion} unitary matrix.   Then there exists a series of switching matrices  and addition matrices $P_1,\dots,P_t$ such that
			$P_t\cdots P_1A=\mathrm{diag}(d_1,\dots,d_n)$, where $d_1,\dots,d_n\in\dqset$  and  $|d_1d_2\cdots d_n|=1$.
		\end{Lem}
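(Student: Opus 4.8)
The plan is to reduce $A$ to diagonal form by switching and addition operations, read off the diagonal entries as the pivots of the $LU$ factorization, and then pin down the magnitude of their product by a congruence argument built on the invariance lemmas already proved.

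First I would invoke the partial-pivoting $LU$ decomposition of \cite{WLWXZ24}: there is a permutation matrix $P$, a unit lower triangular $L$, and an upper triangular $U$ with $PA=LU$. Since $A$ is unitary it is invertible, hence $U$ is invertible and every diagonal entry $u_{ii}$ is appreciable. Now $P$ is a product of switching matrices; the unit lower triangular matrix $L^{-1}$ is a product of addition matrices $P_{ij;c}$ with $i<j$; and, because each $u_{ii}$ is invertible, back substitution clears the strict upper triangle of $U$ using addition operations $P_{ij;c}$ with $i>j$. Composing these left multiplications yields switching and addition matrices $P_1,\dots,P_t$ with $P_t\cdots P_1A=D:=\mathrm{diag}(d_1,\dots,d_n)$, where $d_i=u_{ii}$. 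The only point needing a remark is that no multiplication matrix is required, which holds because both the forward elimination and the back substitution use only transvections.

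The hard part is the magnitude identity $|d_1\cdots d_n|=1$, and here I would exploit unitarity together with the congruence invariance of the Chen determinant. Set $G:=P_1^{-1}\cdots P_t^{-1}$; each factor $P_k^{-1}$ is again a switching matrix (an involution) or an addition matrix ($P_{ij;c}^{-1}=P_{ij;-c}$), and $A=GD$. From $A^*A=I_n$ I obtain $D^*G^*GD=I_n$, hence $G^*G=\mathrm{diag}(|d_1|^{-2},\dots,|d_n|^{-2})$, using $d_id_i^*=|d_i|^2$. The matrix $G^*G$ is Hermitian, and since each factor of $G=P_1^{-1}\cdots P_t^{-1}$ is a switching or addition matrix, repeated application of Lemma~\ref{Lem:switch_Cdet} and Lemma~\ref{Lem:add_Cdet} (each step $B\mapsto (P_k^{-1})^*B\,P_k^{-1}$ preserves both Hermitian-ness and the Chen determinant) reduces $\Cdet(G^*G)$ to $\Cdet(I_n)=1$. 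On the other hand, evaluating $\Cdet$ on a diagonal matrix directly from the defining sum \eqref{equ:Cdet} leaves only the identity permutation, so $\Cdet(G^*G)=\prod_i|d_i|^{-2}$. Comparing the two evaluations gives $\prod_i|d_i|^2=1$.

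Finally I would conclude by multiplicativity of the magnitude, Proposition~\ref{prop:prelim_dq}(iii): $|d_1\cdots d_n|^2=\prod_i|d_i|^2=1$. Since each $d_i$ is appreciable, $|d_1\cdots d_n|$ is a dual number with positive standard part whose square is $1$, which forces $|d_1\cdots d_n|=1$. I expect the main obstacle to be the magnitude claim rather than the diagonalization: the reduction is routine Gaussian elimination, whereas the crux is to identify the right Hermitian object $G^*G$ on which the already-proven congruence invariance applies, and to recognize that its Chen determinant is simultaneously $1$ and $\prod_i|d_i|^{-2}$.
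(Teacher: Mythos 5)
Your proposal is correct, and the diagonalization half is exactly the paper's argument: both take the partial-pivoting LU decomposition $PA=LU$ of \cite{WLWXZ24}, absorb $P$ into switching matrices and $L^{-1}$ into addition matrices, and strip $U$ down to its diagonal $D=\mathrm{diag}(u_{11},\dots,u_{nn})$ by further addition operations (the paper phrases this as $U=U_0D$ with $U_0^{-1}$ unit upper triangular rather than as back substitution, but it is the same step). Where you genuinely diverge is the magnitude claim $|d_1\cdots d_n|=1$. The paper imports the quasi-determinant of Ling and Qi \cite{LQ24} and its multiplicativity: $\qdet(U_0^{-1})=\qdet(L^{-1})=\qdet(P)=1$ and $\qdet(A)^2=\qdet(A^*A)=1$ force $\qdet(D)=|d_1\cdots d_n|^2=1$. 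You instead stay entirely inside the paper's own toolkit: from $A=GD$ and $A^*A=I_n$ you extract the Hermitian matrix $G^*G=\mathrm{diag}(|d_1|^{-2},\dots,|d_n|^{-2})$, and evaluate its Chen determinant two ways --- as $1$ via the congruence invariance of Lemmas~\ref{Lem:switch_Cdet} and \ref{Lem:add_Cdet} applied step by step to $(P_k^{-1})^*(\cdot)P_k^{-1}$ starting from $I_n$, and as $\prod_i|d_i|^{-2}$ from the defining sum on a diagonal matrix. Your route is self-contained (no appeal to \cite{LQ24} or to multiplicativity of $\qdet$ over products of non-Hermitian matrices, which the paper uses without proof here), at the cost of a slightly longer congruence argument; the paper's route is shorter but leans on external machinery. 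All the small points you need check out: the $u_{ii}$ are invertible hence appreciable, $d_id_i^*=|d_i|^2$ holds for appreciable dual quaternions, $\Cdet$ of a diagonal matrix with dual-number entries is the (commutative) product of the diagonal, and a positive appreciable dual number squaring to $1$ equals $1$.
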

		\begin{proof}
			Let $PA=LU$ be the partial pivoting dual quaternion LU	decomposition of $A$.
			Since $P$, $A$, and $L$ are all invertiable, $U$ is also invertiable.
			Let $U = U_0 D$, where $D=\mathrm{diag}(d_1,\dots,d_n)$ {and $d_i=u_{ii}$ for  $i=1,\dots,n$.}
			Then we have $$U_0^{-1}L^{-1}PA=D=\mathrm{diag}(d_1,\dots,d_n).$$
			Here, $U_0^{-1}$ and $L^{-1}$ are unit upper and lower triangular dual quaternion matrices, respectively. Thus, they are products of addition matrices.

			Furthermore, by the quasi-determinant defined in Ling and Qi \cite{LQ24}, we have {$\qdet(U_0^{-1})=1$, $\qdet(L^{-1})=1$, $\qdet(P)=1$, $\qdet(A^*A)=\qdet(A^*)\qdet(A)=\qdet(A)^2=1$, and}
			\[\qdet(U_0^{-1}L^{-1}PA)=\qdet(U_0^{-1})\qdet(L^{-1})\qdet(P)\qdet(A)=1.\]
			%Here, {the second equality  follows from $\qdet(U_0^{-1})=1$, $\qdet(L^{-1})=1$, $\qdet(P)=1$, and  $\qdet(A)=1$ due to $\qdet(A^*A)=\qdet(A^*)\qdet(A)=\qdet(A)^2=1$.}
			{Therefore}, $\qdet(D) = |d_1d_2\cdots d_n|^2$ $=1$.
			This completes the proof.
	\end{proof}}

	Based on the above lemmas, we are ready to show our main results.
	\begin{Thm}\label{Thm:UAU=A}
		{For any} dual quaternion Hermitian  matrix $A\in\dqset^{n\times n}$ and any {dual quaternion} unitary matrix $U\in\dqset^{n\times n}$,  {we have} $\Cdet(U^*AU)=\Cdet(A)$. 	
	\end{Thm}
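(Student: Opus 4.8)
The plan is to use Lemma~\ref{Lem:u_prod} to diagonalize the unitary matrix $U$ by elementary operations whose congruence action on $\Cdet$ is already controlled by Lemmas~\ref{Lem:switch_Cdet}, \ref{Lem:rowprod}, and \ref{Lem:add_Cdet}, and then assemble these pieces. First I would invoke Lemma~\ref{Lem:u_prod} to produce switching and addition matrices $P_1,\dots,P_t$ such that, setting $Q := P_t\cdots P_1$, we have $QU = D = \mathrm{diag}(d_1,\dots,d_n)$ with $|d_1\cdots d_n| = 1$. Writing $U = Q^{-1}D$ and using $(AB)^* = B^*A^*$, I would expand
\[
U^*AU = D^*(Q^{-1})^*A\,Q^{-1}D = D^*CD, \qquad C := (Q^{-1})^*A\,Q^{-1}.
\]
Since $A$ is Hermitian and congruence preserves Hermiticity, because $(R^*AR)^* = R^*A^*R = R^*AR$, both $C$ and $D^*CD$ are Hermitian and $\Cdet$ is well defined at every stage.

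The key step is to show $\Cdet(C) = \Cdet(A)$ by peeling apart the congruence one elementary factor at a time. Since $Q^{-1} = P_1^{-1}\cdots P_t^{-1}$, setting $R_i := P_i^{-1}$ gives $C = R_t^*\cdots R_1^*\,A\,R_1\cdots R_t$, and each $R_i$ is again an elementary matrix of the same type: the inverse of a switching matrix $P_{ij}$ is $P_{ij}$ itself, and the inverse of an addition matrix $P_{ij;\alpha}$ is $P_{ij;-\alpha}$. I would then define $C_0 = A$ and $C_m = R_m^*C_{m-1}R_m$, and apply Lemma~\ref{Lem:switch_Cdet} or Lemma~\ref{Lem:add_Cdet} at each step, using that every $C_{m-1}$ is Hermitian, to get $\Cdet(C_m) = \Cdet(C_{m-1})$; by induction $\Cdet(C) = \Cdet(A)$.

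It remains to treat the diagonal congruence $D^*CD$. I would factor $D = P_{1;d_1}\cdots P_{n;d_n}$ as a commuting product of multiplication matrices and peel them off exactly as above, now invoking $\Cdet(P_{i;\alpha}^*MP_{i;\alpha}) = \alpha^*\alpha\,\Cdet(M)$ from Lemma~\ref{Lem:rowprod} at each step. This gives
\[
\Cdet(D^*CD) = \Bigl(\prod_{i=1}^n d_i^* d_i\Bigr)\Cdet(C).
\]
By Proposition~\ref{prop:prelim_dq}(iii) the dual-number moduli multiply, so $\prod_{i=1}^n |d_i|^2 = |d_1\cdots d_n|^2 = 1$, whence $\Cdet(D^*CD) = \Cdet(C) = \Cdet(A)$, which is the assertion.

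The main obstacle is bookkeeping rather than any deep new idea, since all the required invariance lemmas are already in place. The two points that need care are that each intermediate matrix stays Hermitian so that the lemmas remain applicable, and that the scalars accumulated in the diagonal step are $d_i^* d_i = |d_i|^2$, which are nonnegative dual numbers commuting with everything and therefore can be pulled out and collected into a single factor. It is precisely the appearance of $\alpha^*\alpha$ rather than $\alpha$ in Lemma~\ref{Lem:rowprod} that makes this factor telescope to $|d_1\cdots d_n|^2$, and the normalization $|d_1\cdots d_n| = 1$ furnished by Lemma~\ref{Lem:u_prod} then forces it to equal $1$.
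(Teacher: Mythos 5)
Your proposal is correct and follows essentially the same route as the paper: invoke Lemma~\ref{Lem:u_prod} to write $U=P_1^{-1}\cdots P_t^{-1}D$, peel off the switching and addition congruences via Lemmas~\ref{Lem:switch_Cdet} and \ref{Lem:add_Cdet}, and absorb the diagonal factor using Lemma~\ref{Lem:rowprod}. In fact you make explicit a step the paper leaves implicit, namely that the accumulated factor $\prod_{i=1}^n d_i^*d_i=|d_1\cdots d_n|^2$ equals $1$ by the normalization from Lemma~\ref{Lem:u_prod}.
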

	\begin{proof}
		By Lemma~\ref{Lem:u_prod}, there are switching matrices  and addition matrices $P_1,\dots,$ $P_t$ such that
		$U=P_1^{-1}\cdots P_t^{-1}D$,   $D=\mathrm{diag}(d_1,\dots,d_n)$, {and $|d_1\cdots d_n|=1$.} {Since  $P_{ij}^{-1} = P_{ij}$,  $P_{ij;\alpha}^{-1}=P_{ij;-\alpha}$,    {$P_1^{-1},\dots,$} $P_t^{-1}$ are also switching or  addition matrices.}
		Thus, we have
		\begin{eqnarray*}
			\Cdet(U^*AU)&=& \Cdet(D^*(P_t^{-1})^*\cdots (P_1^{-1})^*AP_1^{-1}\cdots P_t^{-1}D)=\Cdet(A).
		\end{eqnarray*}
		%Since  $P_{ij}^{-1} = P_{ij}$,  $P_{ij;\alpha}^{-1}=P_{ij;-\alpha}$,    {$P_1,\dots,$} $P_t$ are also switching or  addition matrices.
		{Here}, the last equality follows directly from  Proposition~\ref{prop_Cdet0}.
		This completes the proof.
	\end{proof}
	
	\begin{Thm}\label{thm:cdet_eig}
		Given a   dual quaternion Hermitian  matrix $A\in\dqset^{n\times n}$ and any {dual quaternion} unitary matrix $U\in\dqset^{n\times n}$, {we have}
		\[\Cdet(A) = \prod_{i=1}^n\lambda_i\]
		and
		\[\Cdet(A^*A)=\Cdet(A)^2=\prod_{i=1}^n\lambda_i^2,\]
		where $\lambda_1,\dots, \lambda_n$ are eigenvalues of $A$.
	\end{Thm}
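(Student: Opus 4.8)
The plan is to reduce the evaluation of $\Cdet(A)$ to that of a diagonal matrix by combining the unitary diagonalization of dual quaternion Hermitian matrices with the invariance just established in Theorem~\ref{Thm:UAU=A}. By the spectral result of Qi and Luo \cite{QL23}, there is a dual quaternion unitary matrix $U$ such that $U^*AU=\Lambda$, where $\Lambda=\mathrm{diag}(\lambda_1,\dots,\lambda_n)$ and $\lambda_1,\dots,\lambda_n$ are the (dual number) eigenvalues of $A$. Applying Theorem~\ref{Thm:UAU=A} then gives $\Cdet(A)=\Cdet(U^*AU)=\Cdet(\Lambda)$, so the whole matter reduces to computing the Chen determinant of a diagonal matrix.

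First I would compute $\Cdet(\Lambda)$ directly from the definition \eqref{equ:Cdet}. Since every off-diagonal entry of $\Lambda$ vanishes, any permutation $\sigma\in S_n^C$ whose cyclic decomposition contains a cycle of length at least two contributes a factor $a_{n_{i1}n_{i2}}=0$ with $n_{i1}\neq n_{i2}$, so the entire term $a_\sigma$ drops out. The only surviving contribution comes from the identity permutation $(n)(n-1)\cdots(1)$, which satisfies \eqref{equ:sigma_Cdet}, has $r=n$ disjoint cycles, and hence sign $(-1)^{n-n}=1$, giving $a_\sigma=\lambda_1\lambda_2\cdots\lambda_n$. Therefore $\Cdet(\Lambda)=\prod_{i=1}^n\lambda_i$, and combining with the previous paragraph yields $\Cdet(A)=\prod_{i=1}^n\lambda_i$.

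For the second identity I would note that $A$ Hermitian gives $A^*=A$, so $A^*A=A^2$ is again Hermitian, because $(A^2)^*=(A^*)^2=A^2$. From $U^*AU=\Lambda$ and $UU^*=I_n$ we obtain $U^*A^2U=\Lambda^2=\mathrm{diag}(\lambda_1^2,\dots,\lambda_n^2)$, so applying Theorem~\ref{Thm:UAU=A} to the Hermitian matrix $A^2$ and repeating the diagonal computation of the previous paragraph gives $\Cdet(A^*A)=\Cdet(\Lambda^2)=\prod_{i=1}^n\lambda_i^2$. Since the eigenvalues are dual numbers and thus commute with all dual quaternions, $\left(\prod_{i=1}^n\lambda_i\right)^2=\prod_{i=1}^n\lambda_i^2$, which identifies $\Cdet(A^*A)=\Cdet(A)^2=\prod_{i=1}^n\lambda_i^2$ and closes the chain of equalities.

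The one point requiring genuine care, rather than routine bookkeeping, is the legitimacy of the reduction itself: Theorem~\ref{Thm:UAU=A} applies only to Hermitian matrices, so I must verify that both $A$ and $A^2$ are Hermitian before invoking unitary invariance, and I must use the spectral decomposition of \cite{QL23} in the precise form $U^*AU=\Lambda$ with $\Lambda$ diagonal and $U$ unitary. Granting these, the argument is a short consequence of unitary invariance plus the vanishing of every off-diagonal contribution; the centrality of the dual number eigenvalues is exactly what makes the factorization $\prod_{i=1}^n\lambda_i^2=\left(\prod_{i=1}^n\lambda_i\right)^2$ unambiguous despite the noncommutativity of general dual quaternions.
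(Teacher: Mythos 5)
Your proposal is correct and follows essentially the same route as the paper: diagonalize $A$ via the spectral theorem of \cite{QL23}, invoke Theorem~\ref{Thm:UAU=A} to reduce to $\Cdet(\Lambda)$, and use commutativity of dual numbers for both the product formula and the identity $\Cdet(A^*A)=\Cdet(\Lambda^2)=\Cdet(A)^2$. Your explicit verification that only the identity permutation survives in $\Cdet(\Lambda)$ is a detail the paper leaves implicit, but it is the same argument.
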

	\begin{proof}
		By~\cite{QL23}, there exists a dual quaternion unitary matrix $U$ such that $UAU^*=\Lambda = \mathrm{diag}(\lambda_1,\dots,\lambda_n)$ and all eigenvalues $\lambda_i$, $i=1,\dots,n$ are dual numbers.
		Combining this with Theorem~\ref{Thm:UAU=A}, we have $\Cdet(A) = \Cdet(U^*AU)=\Cdet(\Lambda)=\lambda_n\cdots\lambda_1$.
		Since  dual numbers are communicative, the first conclusion is derived.
		
		The second conclusion  follows from {$\Lambda^*=\Lambda$,} $A^*A = U^*\Lambda^2U$ and  Theorem~\ref{Thm:UAU=A}.
		
		This completes the proof.		
	\end{proof}
	
	\section{The  Moore Determinant and the Eigenvalues}
	
	\begin{Thm}\label{Thm:Mdet=Cdet}
		Let $A=(a_{ij})$ be  {a dual quaternion} Hermitian matrix in $\dqset^{n\times n}$.
		Then we have
		\begin{equation}\label{equ:Cdet2}
			\Mdet(A) = \Cdet(A).
		\end{equation}
		Furthermore,    {we have}
		$$\Mdet(A)= \prod_{i=1}^n\lambda_i,$$
		where $\lambda_1,\dots, \lambda_n$ are the eigenvalues of $A$.
	\end{Thm}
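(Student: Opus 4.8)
The plan is to prove the two assertions in sequence: first the identity $\Mdet(A)=\Cdet(A)$, and then to observe that the ``furthermore'' part is an immediate consequence, since Theorem~\ref{thm:cdet_eig} already gives $\Cdet(A)=\prod_{i=1}^n\lambda_i$, whence $\Mdet(A)=\Cdet(A)=\prod_{i=1}^n\lambda_i$. So the entire content lies in establishing \eqref{equ:Cdet2}. My first observation would be that the two sums defining $\Mdet$ and $\Cdet$ carry the \emph{same} sign weight: a $k$-cycle has sign $(-1)^{k-1}$, so a permutation factoring into $r$ disjoint cycles of lengths $l_1,\dots,l_r$ with $\sum_i l_i=n$ has sign $\prod_{i=1}^r(-1)^{l_i-1}=(-1)^{n-r}$, and hence Moore's weight $s(\sigma)$ coincides with Chen's weight $(-1)^{n-r}$.

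Next I would note that both $S_n^M$ and $S_n^C$ are canonical systems of representatives for the \emph{same} underlying permutations: every permutation admits a unique cyclic decomposition written with each cycle opened at its minimum and the cycles ordered by decreasing minima (the Moore normal form \eqref{equ:sigma_Mdet}), and also a unique one with each cycle opened at its maximum and the cycles ordered by decreasing maxima, the first cycle necessarily containing $n$ (the Chen normal form \eqref{equ:sigma_Cdet}). This sets up a canonical bijection $S_n^M\to S_n^C$ fixing each abstract permutation, under which the monomials $a_\sigma$ differ in only two respects: the index at which each cycle is opened, and the left-to-right order in which the cycle factors $\langle\sigma_i\rangle$ are multiplied. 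The task is then to show these two differences wash out in the summation.

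The heart of the argument, and the step I expect to be the main obstacle, is reconciling these differences despite the noncommutativity of dual quaternion multiplication. A single factor $\langle\sigma_i\rangle$ is \emph{not} a dual number, so one cannot simply rotate its opening index (a cyclic permutation of the product) nor freely commute it past the remaining factors. The remedy is exactly the device used throughout Section~3: for a Hermitian $A$ one has $\langle\sigma_i\rangle=\langle\bar\sigma_i\rangle^*$, so each symmetrized block $\langle\sigma_i\rangle+\langle\bar\sigma_i\rangle$ is a dual number by Proposition~\ref{prop:prelim_dq}, hence central and invariant under cyclic rotation of its opening index by Lemma~\ref{Lem:<s>2+=2}. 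I would therefore pair each permutation with the family obtained by independently reversing any subset of its cycles, summing over the resulting reversal patterns; this converts the product $\prod_i\langle\sigma_i\rangle$ into a product of central dual-number blocks that may be reopened at any index and multiplied in any order without changing the value. Performing this symmetrization in Moore normal form and in Chen normal form then yields the same quantity, and summing over all permutations with the common sign $(-1)^{n-r}$ gives $\Mdet(A)=\Cdet(A)$. The delicate bookkeeping is to carry out the reversals on all cycles simultaneously while keeping the sign weights and the $S_n^M\leftrightarrow S_n^C$ correspondence consistent; an alternative that sidesteps part of this is to show that $\Cdet$ of a Hermitian matrix obeys the same recursion \eqref{equ:Mdet2} as $\Mdet$ and to conclude by induction on $n$, with the trivial base case $n=1$.
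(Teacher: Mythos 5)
Your proposal is correct and follows essentially the same route as the paper's proof: both arguments symmetrize over all $2^r$ cycle reversals so that each block $\langle\sigma_i\rangle+\langle\bar\sigma_i\rangle$ becomes a central dual number, then invoke Lemma~\ref{Lem:<s>2+=2} to reopen each cycle at its maximum and commute the blocks into Chen normal form, checking that the sign $(-1)^{n-r}$ is preserved under reversal and reordering. The ``furthermore'' part is handled identically via Theorem~\ref{thm:cdet_eig}.
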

	\begin{proof}
		Let $\sigma=\sigma_1\cdots \sigma_r \in S_n^M$ be a  permutation in the Moore determinant.
		For each $i=1,\dots,r$,   denote $$\sigma_i = (p_{i1}\cdots p_{is_i} k_i q_{i1} \cdots q_{it_i}),$$
		where $p_{i1}$ and $k_i$ are the minimal and maximal integers in $\sigma_i$, respectively.  Denote
		$\bar\sigma_i = (p_{i1} q_{it_i}\cdots q_{i1} k_i p_{is_i} \cdots p_{i2})$, $\sigma_i^+ = (k_i q_{i1} \cdots q_{it_i}p_{i1}\cdots p_{is_i})$, and  $\bar\sigma_i^+ = (k_i p_{is_i} \cdots p_{i1}q_{it_i} \cdots q_{i1} ).$
		By Lemma~\ref{Lem:<s>2+=2}, we have $$\langle \sigma_i \rangle + \langle \bar \sigma_i \rangle = \langle \sigma_i^+ \rangle + \langle \bar \sigma_i ^+\rangle\in\drset.$$
		%For convenience, if $\bar\sigma_i  =  \sigma_i $, we denote $\bar\sigma_i  = \emptyset$.
		
		Thus, we have
		\begin{eqnarray*}
			&&\left(\langle \sigma_1 \rangle + \langle \bar \sigma_1\rangle \right) \cdots \left(\langle \sigma_r \rangle + \langle \bar \sigma_r \rangle \right) \\
			&= & \left(\langle \sigma_1 ^+\rangle + \langle \bar \sigma_1^+ \rangle \right) \cdots \left(\langle \sigma_r^+ \rangle + \langle \bar \sigma_r^+ \rangle \right) \\
			&= & \left(\langle \sigma_{i_1} ^+\rangle + \langle \bar \sigma_{i_1}^+ \rangle \right) \cdots \left(\langle \sigma_{i_r}^+ \rangle + \langle \bar \sigma_{i_r}^+ \rangle \right).
		\end{eqnarray*}
		Here, $i_1,\dots, i_r$ are obtained by sorting $k_i$ in the  descending order
		and the second equality holds  since the dual numbers are communicative.
		Let {$s_j \in \{\sigma_j,\bar\sigma_j \}$ for $j=1,\dots,n$.} Then $\sigma = s_1\cdots s_r \in S_n^M$ is a permutation in the Moore determinant.
		Similarly, 		let {$s_j^+\in \{\sigma_{i_j}^+,\bar\sigma_{i_j}^+ \}$ for $j=1,\dots,n$.} Then $\sigma^+ = s_1^+\cdots s_r^+ \in S_n^C$ is a permutation in the Chen determinant.
		{Furthermore, those cycle factors are obtained by  reversing   or switching the original cycle factor. Reversing a cycle factor derives  the inverse or conjugate of the corresponding permutation,  which does not change the sign. Switching the cycle factors does not change the corresponding  permutation. Thus,   the    signs of all permutations above  are the same. By}
		going through all permutations in the Moore determinant, we have $	\Mdet(A) = \Cdet(A)$.
		
		The second conclusion of this theorem holds directly from {equation \eqref{equ:Cdet2}} and Theorem~\ref{thm:cdet_eig}.
		This completes the proof.
	\end{proof}
	{Kyrchei \cite{Kyr08, Kyr12} showed that the Kyrchei row and column determinants of a quaternion Hermitian matrix are equal to the Moore determinant. We could also extend this result to dual quaternion Hermitian matrices and show the Kyrchei row and column determinants of a quaternion Hermitian matrix are equal to the Moore determinants, which are also equal to the products of the eigenvalues.}

	\begin{Thm}
		Suppose $A\in\dqset^{n\times n}$ is   a dual quaternion Hermitian matrix and $\lambda_1,\dots,\lambda_n\in\drset$ are its eigenvalues. Then $\Mdet(A) =0$ if and only if $A_s$ is singular and $A$ has  {at least} one zero or two infinitesimal eigenvalues; namely, there is $i,j\in \{1,\dots,n\}$ such that  $\lambda_i=0$ or  $\lambda_{i,s}=\lambda_{j,s}=0$.
	\end{Thm}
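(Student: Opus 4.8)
The plan is to reduce everything to the product formula for the Moore determinant and then read off when a product of dual numbers vanishes. By Theorem~\ref{Thm:Mdet=Cdet} we have $\Mdet(A)=\prod_{i=1}^n\lambda_i$, where each eigenvalue is a dual number $\lambda_i=\lambda_{i,s}+\lambda_{i,d}\epsilon$ with $\lambda_{i,s},\lambda_{i,d}\in\rset$. Expanding the product and using $\epsilon^2=0$, only the terms with at most one dual factor survive, so
\[
\Mdet(A)=\prod_{i=1}^n\lambda_{i,s}+\left(\sum_{j=1}^n\lambda_{j,d}\prod_{i\neq j}\lambda_{i,s}\right)\epsilon.
\]
Since a dual number is zero exactly when both its standard and dual parts vanish, $\Mdet(A)=0$ if and only if $\prod_i\lambda_{i,s}=0$ and $\sum_j\lambda_{j,d}\prod_{i\neq j}\lambda_{i,s}=0$. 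The whole argument then consists of translating these two scalar conditions into the stated characterization.

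First I would dispose of the standard-part condition. Writing the unitary diagonalization $U^*AU=\Lambda$ from~\cite{QL23} in standard and dual parts, the standard part yields $U_s^*A_sU_s=\mathrm{diag}(\lambda_{1,s},\dots,\lambda_{n,s})$, and from $U^*U=I$ one reads off $U_s^*U_s=I$, so $U_s$ is a quaternion unitary matrix. Hence the $\lambda_{i,s}$ are precisely the eigenvalues of $A_s$, and $\prod_i\lambda_{i,s}=0$ holds if and only if $A_s$ is singular, i.e.\ if and only if at least one $\lambda_{i,s}=0$. This identifies the first of the two scalar conditions with the singularity of $A_s$, and in particular shows that ``$A_s$ singular'' is automatically implied by either disjunct of the statement.

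The heart of the proof is then the dual-part condition under the assumption that $A_s$ is singular, which I would settle by counting how many standard parts vanish. If two standard parts vanish, say $\lambda_{i,s}=\lambda_{j,s}=0$ with $i\neq j$, then every summand $\lambda_{l,d}\prod_{k\neq l}\lambda_{k,s}$ still contains at least one of these two zero factors, because deleting the single index $l$ cannot remove both; hence the dual part vanishes automatically and $\Mdet(A)=0$. If instead exactly one standard part vanishes, say $\lambda_{i,s}=0$ and $\lambda_{k,s}\neq 0$ for all $k\neq i$, then all summands with $l\neq i$ contain the factor $\lambda_{i,s}=0$, so the dual part reduces to $\lambda_{i,d}\prod_{k\neq i}\lambda_{k,s}$; as the remaining product is nonzero, this vanishes if and only if $\lambda_{i,d}=0$, that is, if and only if $\lambda_i=0$. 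Collecting the two subcases gives exactly the claimed condition: $\Mdet(A)=0$ if and only if $A_s$ is singular and either some $\lambda_i=0$ or some pair satisfies $\lambda_{i,s}=\lambda_{j,s}=0$ with $i\neq j$.

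I expect the only genuinely delicate point to be the bookkeeping in the dual-part expansion, specifically the observation that deleting a single index from the product $\prod_{k\neq l}\lambda_{k,s}$ cannot destroy two distinct zero factors. This is what forces the automatic vanishing in the two-infinitesimal case and the clean $\lambda_{i,d}=0$ reduction in the one-infinitesimal case; everything else follows directly from Theorem~\ref{Thm:Mdet=Cdet} and the standard-part diagonalization. Care should also be taken with the degenerate reading of the statement, namely confirming that the two disjuncts together cover precisely the set $\{\prod_i\lambda_{i,s}=0\}\cap\{\sum_j\lambda_{j,d}\prod_{i\neq j}\lambda_{i,s}=0\}$, so that no case is double-counted or omitted.
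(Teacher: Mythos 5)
Your proposal is correct and follows essentially the same route as the paper: both apply Theorem~\ref{Thm:Mdet=Cdet} to write $\Mdet(A)=\prod_i\lambda_i$, expand into standard and dual parts using $\epsilon^2=0$, and then analyze when both parts vanish via the location of the zero standard parts. Your version is somewhat more detailed (in particular, you explicitly justify that the $\lambda_{i,s}$ are the eigenvalues of $A_s$, which the paper takes for granted), but the underlying argument is the same.
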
	
	\begin{proof}
		By Theorem~\ref{Thm:Mdet=Cdet}, we have
		\[\Mdet(A)= \prod_{i=1}^n\lambda_i=\prod_{i=1}^n\lambda_{is} + \sum_{j=1}^n \prod_{i=1,i\neq j}^n\lambda_{is}\lambda_{jd}\epsilon.\]

		{On one hand, if there is $i,j\in \{1,\dots,n\}$ such that  $\lambda_i=0$ or  $\lambda_{i,s}=\lambda_{j,s}=0$, we can check that $\Mdet(A)=0$ directly.
			On the other hand, if  $\Mdet(A)=0$, then there is $j\in\{1,\dots,n\}$ such that  $\lambda_{js}=0$ and $\prod_{i=1,i\neq j}^n\lambda_{is}\lambda_{jd}=0$.
			Thus, either $\lambda_{jd}=0$ or there is $i\neq j$ such that $\lambda_{is}=0$.}
		This completes the proof.
	\end{proof}
	
	\section{The Characteristic Polynomial}
	
	Suppose $A\in\dqset^{n\times n}$ is   a dual quaternion Hermitian matrix and $\lambda\in \dqset$ is an arbitrary dual number.
	Define
	\begin{equation}\label{def:charpoly}
		p(\lambda) = \Mdet(\lambda I-A).
	\end{equation}
	{Assume that $\lambda_1,\dots,\lambda_n$ are eigenvalues of $A$. Then \eqref{def:charpoly} is equivalent to
		\begin{equation}\label{def:charpoly_eigs}
			p(\lambda) = (\lambda-\lambda_1)(\lambda-\lambda_2)\cdots(\lambda-\lambda_n).
		\end{equation}
		The later definition is also used in Qi and Cui \cite{QC24}.  It was shown in \cite{QC24} that all eigenvalues of $A$ are also roots of the characteristic polynomial. %However,
		{The} reverse does not hold {in general}.
		In fact, a dual number $\lambda$ is a root of $p(\lambda)$ either if $\lambda$ is an eigenvalue of $A$ or $\lambda_s$ is a multiple eigenvalue of $A_s$. {However, the reverse holds true when all eigenvalues of $A_s$ are single.}
		
		\begin{Cor}
			Suppose all eigenvalues of $A_s$ are single. Then a dual number is the root  of the  characteristic polynomial \eqref{def:charpoly} if and only if it is an  eigenvalue  of $A$.
		\end{Cor}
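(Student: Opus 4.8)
The plan is to argue directly from the factored form of the characteristic polynomial given in \eqref{def:charpoly_eigs}, namely $p(\lambda)=(\lambda-\lambda_1)\cdots(\lambda-\lambda_n)$, working in the commutative ring of dual numbers, where an element $a+b\epsilon$ vanishes exactly when $a=b=0$ and is a zero divisor exactly when $a=0$. The single-eigenvalue hypothesis enters as the assertion that the standard parts $\lambda_{1s},\dots,\lambda_{ns}$, which are precisely the eigenvalues of $A_s$, are pairwise distinct. I would use this distinctness at two separate moments, and the second of these is the crux of the argument.

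The ``if'' direction is immediate: if $\mu=\lambda_j$ for some $j$, then the $j$-th factor of $p(\mu)$ is zero, and since a product of dual numbers with a zero factor vanishes, $p(\mu)=0$. For the ``only if'' direction I would write $\mu=\mu_s+\mu_d\epsilon$ and expand $p(\mu)$ into its standard and dual parts, exactly as in the proof of the preceding theorem:
\[
p(\mu)=\prod_{i=1}^n(\mu_s-\lambda_{is})
+\left(\sum_{k=1}^n(\mu_d-\lambda_{kd})\prod_{i\neq k}(\mu_s-\lambda_{is})\right)\epsilon.
\]
Vanishing of the standard part forces $\prod_{i=1}^n(\mu_s-\lambda_{is})=0$, hence $\mu_s=\lambda_{js}$ for some index $j$; by distinctness of the standard parts this $j$ is unique. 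I would then inspect the dual part: every summand with $k\neq j$ contains the factor $(\mu_s-\lambda_{js})=0$ and so drops out, leaving the single equation $(\mu_d-\lambda_{jd})\prod_{i\neq j}(\lambda_{js}-\lambda_{is})=0$. Combining $\mu_s=\lambda_{js}$ with $\mu_d=\lambda_{jd}$ then yields $\mu=\lambda_j$, an eigenvalue of $A$.

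The main obstacle—and the only place the hypothesis is indispensable—is the cancellation in this last equation. Because all standard parts are distinct, $\prod_{i\neq j}(\lambda_{js}-\lambda_{is})$ is a \emph{nonzero} real number and may be cancelled to conclude $\mu_d=\lambda_{jd}$. Were some eigenvalue of $A_s$ repeated, a root $\mu$ with $\mu_s$ equal to that repeated value would make $\prod_{i\neq j}(\mu_s-\lambda_{is})$ contain a zero factor, the dual-part equation would hold automatically for every $\mu_d$, and one would obtain an entire line of spurious roots that are not eigenvalues. This is precisely the behaviour flagged in the remark preceding the corollary, so the content of the proof is really the observation that single eigenvalues of $A_s$ exclude it. One could equivalently shortcut the whole computation by appealing to that remark: under the hypothesis no $\lambda_s$ can be a multiple eigenvalue of $A_s$, so the second alternative there is vacuous and every root of $p$ must be an eigenvalue of $A$.
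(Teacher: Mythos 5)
Your proof is correct and follows essentially the same route the paper intends: the paper states this corollary without a separate proof, relying on the remark immediately before it (a dual number is a root of $p$ iff it is an eigenvalue of $A$ or its standard part is a multiple eigenvalue of $A_s$), and your expansion of $p(\mu)$ into standard and dual parts is exactly the computation that justifies that remark and shows why distinctness of the $\lambda_{is}$ kills the spurious roots. Your write-up in fact supplies the verification the paper leaves implicit, with the key cancellation $(\mu_d-\lambda_{jd})\prod_{i\neq j}(\lambda_{js}-\lambda_{is})=0$ correctly identified as the only place the hypothesis is used.
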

		
		Very recently, Ling and Qi \cite{LQ24} proposed a characteristic polynomial by the quasi-determinant of dual quaternion matrices. In their definition, there is
		\begin{equation}\label{def:charpoly_LQ24}
			p_q(\lambda)= \mathrm{det}_q(\lambda I - A) = |\lambda-\lambda_1|^2|\lambda-\lambda_2|^2\cdots |\lambda-\lambda_n|^2.
		\end{equation}
		We may verify that
		$p_q(\lambda) = |p(\lambda)|^2.$
		{In other words, the sign of the determinant is lost in the quasi-determinant. Furthermore, it}
		was shown in \cite{LQ24} that all eigenvalues of $A$ are also roots of the characteristic polynomial \eqref{def:charpoly_LQ24}. However, the reverse does not hold, even if all eigenvalues of  $A_s$ are single.
		
		\medskip
		
		{{\bf Acknowledgment}
			We are grateful to Prof. Zhigang Jia for helpful discussions.}

		\bigskip

		% \vspace{100pt}

	\end{document}